\documentclass{amsart}
\usepackage{amsmath,amssymb,amscd}
\usepackage[colorlinks=black,linkcolor=black]{hyperref}
\usepackage{cite}
\usepackage{geometry}
\geometry{top = 3cm, bottom = 3cm, left = 3cm, right = 3cm}
\newtheorem{theorem}{Theorem}[section]
\newtheorem{lemma}[theorem]{Lemma}
\newtheorem{proposition}[theorem]{Proposition}
\newtheorem{corollary}[theorem]{Corollary}

\newtheorem{remark}[theorem]{Remark}
\numberwithin{equation}{section}
\allowdisplaybreaks[4]
\DeclareUnicodeCharacter{2212}{-}

\begin{document}

\title[Spectral Invariants of the Magnetic Dirichlet-to-Neumann Map]{Spectral Invariants of the Magnetic Dirichlet-to-Neumann Map on Riemannian Manifolds}

\author{Genqian Liu}
\address{School of Mathematics and Statistics, Beijing Institute of Technology, Beijing 100081, China}
\email{liugqz@bit.edu.cn}

\author{Xiaoming Tan}
\address{School of Mathematics and Statistics, Beijing Institute of Technology, Beijing 100081, China}
\email{xtan@bit.edu.cn}

\subjclass[2020]{35P20, 58J50, 35S05, 58J40}
\keywords{Magnetic Schr\"{o}dinger equation, Magnetic Dirichlet-to-Neumann map, Magnetic Steklov eigenvalues, Spectral invariants, Asymptotic expansion}

\begin{abstract}
    This paper is devoted to investigate the heat trace asymptotic expansion corresponding to the magnetic Steklov eigenvalue problem on Riemannian manifolds with boundary. We establish an effective procedure, by which we can calculate all the coefficients $a_0$, $a_1$, $\dots$, $a_{n-1}$ of the heat trace asymptotic expansion. In particular, we explicitly give the expressions for the first four coefficients. These coefficients are spectral invariants which provide precise information concerning the volume and curvatures of the boundary of the manifold and some physical quantities by the magnetic Steklov eigenvalues.
\end{abstract}

\maketitle

\section{Introduction}

Let $(\Omega,g)$ be an $n$-dimensional, smooth, compact Riemannian manifold with smooth boundary $\partial \Omega$. The magnetic potential $A \in \mathfrak{X}(\Omega)$ is a vector field with smooth complex-valued coefficients, and the electric potential $q \in C^{\infty}(\Omega)$ is a smooth complex-valued function on $\Omega$. We denote by $\operatorname{grad}$, $\operatorname{div}$ and $\langle \cdot,\cdot \rangle$ the gradient operator, the divergence operator and the inner product on $\Omega$ with respect to the Riemannian metric $g$, respectively. The magnetic Schr\"{o}dinger operator $\mathcal{L}_{g}$ (also called Schr\"{o}dinger operator with magnetic fields) on the Riemannian manifold is defined by
\begin{equation}\label{1.2}
    \mathcal{L}_{g} u = - \Delta_{g} u - 2i \langle A,\operatorname{grad}u \rangle + Vu
\end{equation}
for $u \in C^{\infty}(\Omega)$, where $i = \sqrt{-1}$, $\Delta_{g}$ is the Laplace-Beltrami operator, and
\begin{align}\label{1.3}
    V = |A|^2 - i \operatorname{div} A + q - k^2, \quad k \in \mathbb{R}.
\end{align}
Particularly, the magnetic Schr\"{o}dinger operator $\mathcal{L}_{g}$ reduces to the Schr\"{o}dinger operator $-\Delta_{g} + V$ when $A = 0$, and it further reduces to the Laplace-Beltrami operator $-\Delta_{g}$ when $A = V = 0$.

The magnetic Schr\"{o}dinger operator appears naturally in some mathematical models related to quantum physics because the magnetic Schr\"{o}dinger equation $\mathcal{L}_{g} u = 0$ describes the behavior of a quantum non-relativistic particle under the influence of the external electric and magnetic fields (see \cite{AHB78.1}). In quantum physics, the history of the magnetic Schr\"{o}dinger operator can be traced back to the last century. The rigorous mathematical theory of this operator has started with studying the classical Schr\"{o}dinger operator $-\Delta + q$ and its multi-particle analogues. Here the Laplace operator describes the kinetic energy of the particle and $q$ is the electric potential. Although both of the kinetic and potential energy operators are quite simple to study separately, their sum exhibits a rich variety of complex phenomena which differ from their classical counterparts in many aspects. The mathematical theory of the classical Schr\"{o}dinger operator is the most developed and most extensive in mathematical physics (see \cite{Simon00}).

In order to study the behavior of a particle under the influence of the external magnetic fields, magnetic potentials are included in the theory of classical Schr\"{o}dinger operator, but spins are neglected. In this situation, the kinetic energy operator is modified from $-\Delta$ to $(-i \nabla + A)^2$ by the minimal substitution rule. Here $-i \nabla$ is the momentum operator, and $A$ is the magnetic vector potential that generates the magnetic field. The operator $(-i \nabla+A)^2 + q$ is called the classical magnetic Schr\"{o}dinger operator. It is important to note that the theory of the classical magnetic Schr\"{o}dinger operator is more complicated than that of $-\Delta + q$. Actually, the kinetic energy part contains noncommuting operators, in other words, $\big[(-i \nabla + A)_k,(-i \nabla + A)_l\big] \neq 0$ for $k \neq l$, the commutator is the magnetic field (up to a factor $i$). Notice that the magnetic spectrum is characteristically different from that of the free Laplacian (see \cite{Erdos07}). The magnetic Schr\"{o}dinger operator has a wide range of applications in theoretical physics, such as the Ginzburg-Landau theory of superconductors (see \cite{BCP12}, \cite{EPV08}, \cite{FourHelf10}), the theory of Bose-Einstein condensates (see \cite{KapKev05}), and the study of edge states in quantum mechanics (see \cite{Frank07}, \cite{RagHal08}).

\addvspace{2mm}

Let $\bar{v}$ be the complex conjugation of $v \in C^{\infty}(\Omega)$, and $\nu$ be the outward unit normal vector to $\partial \Omega$. By integrating by parts, one has
\begin{align*}
    \int_{\Omega} (\mathcal{L}_{g} u)\bar{v} \,dV
    & = \int_{\Omega} \langle \operatorname{grad} u + iuA,\ \operatorname{grad} \bar{v} - i\bar{v}A \rangle \,dV + \int_{\Omega} (q - k^2) u\bar{v} \,dV \\
    & \quad - \int_{\partial \Omega} \langle \operatorname{grad} u + iuA,\ \nu \rangle \bar{v} \,dS \quad \text{for}\ u,v \in C^{\infty}(\Omega), 
\end{align*}
where $dV$ is the Riemannian volume element on $\Omega$, and $dS$ is the Riemannian volume element on $\partial \Omega$ (with respect to the orientation induced from $\Omega$). Actually, the magnetic Schr\"{o}dinger operator is a self-adjoint operator (the self-adjointness of this operator is just a requirement for the operator to describe an energy observable in quantum mechanics, or see \cite{IkebeKato62}, \cite{Kato72}, \cite{Knowles77}, \cite{Sche75}, \cite{Simon00}, \cite{Simon73}).

The Dirichlet-to-Neumann map $\mathcal{M}: H^{\frac{1}{2}}(\partial \Omega) \to H^{-\frac{1}{2}}(\partial \Omega)$ associated with the magnetic Schr\"{o}dinger operator $\mathcal{L}_{g}$ is defined by
\begin{equation}\label{1.5}
    \mathcal{M}(f)
    := \langle \operatorname{grad} u + iuA,\nu \rangle \big|_{\partial \Omega}
    = (\partial_\nu u + i\langle A,\nu \rangle u)\big|_{\partial \Omega},
\end{equation}
where $\langle \operatorname{grad} u + iuA,\nu \rangle |_{\partial \Omega}$ is called the magnetic normal derivative on the boundary (magnetic Neumann boundary condition), and $u$ solves the corresponding Dirichlet problem:
\begin{equation}\label{1.4}
    \begin{cases}
        \mathcal{L}_{g} u = 0 \quad & \text{in}\ \Omega, \\
        u = f & \text{on}\ \partial \Omega.
    \end{cases}
\end{equation}
We always assume that $0$ is not a Dirichlet eigenvalue for the magnetic Schr\"{o}dinger operator $\mathcal{L}_{g}$, by properly selecting a real number $k$, so that the Dirichlet problem \eqref{1.4} has a unique solution. The map $\mathcal{M}$ is called the magnetic Dirichlet-to-Neumann map. Obviously, $\mathcal{M}$ is a self-adjoint, first order pseudodifferential operator (see \cite{NSU95}), and it reduces to the classical Dirichlet-to-Neumann map when $A = V = 0$.

\addvspace{2mm}

In the present paper, we consider the following magnetic Steklov eigenvalue problem:
\begin{equation}\label{1.4.1}
    \begin{cases}
        \mathcal{L}_{g} u = 0 \quad & \text{in}\ \Omega, \\
        \mathcal{M}(u) = \lambda u & \text{on}\ \partial \Omega.
    \end{cases}
\end{equation}
The spectrum of the magnetic Steklov eigenvalue problem \eqref{1.4.1} consists of a discrete sequence
\begin{align*}
    0 \leqslant \lambda_{1} \leqslant \lambda_{2} \leqslant \cdots \leqslant \lambda_{k} \leqslant \cdots \to \infty
\end{align*}
with each eigenvalue repeated according to its multiplicity. The corresponding eigenfunctions $\{u_k\}_{k \geqslant 1}$ form an orthogonal basis in $L^{2}(\partial \Omega)$. Note that the ratio of the magnetic normal derivative to the wave function is just the magnetic Steklov eigenvalue.

This problem originates from inverse spectral problems. One hopes to recover the geometry of a manifold from the set of the known data (the spectrum of a differential or pseudodifferential operator). Notice that the spectrum $\{\lambda_k\}_{k \geqslant 1}$ of the magnetic Steklov eigenvalue problem \eqref{1.4.1} is just the spectrum of the magnetic Dirichlet-to-Neumann map $\mathcal{M}$, and these eigenvalues are physical quantities which can be measured experimentally. Apparently, the knowledge provided by the map $\mathcal{M}$ (i.e., the set of the Cauchy data $\{(u|_{\partial \Omega},(\partial_\nu u + i\langle A,\nu \rangle u)\big|_{\partial \Omega})\}$) is equivalent to the information given by all of the magnetic Steklov eigenvalues and the corresponding eigenfunctions.

Here we briefly recall the classical Steklov eigenvalue problem associated with the Laplace-Beltrami operator as follows:
\begin{equation*}
    \begin{cases}
        \Delta_g u = 0 \quad & \text{in}\ \Omega, \\
        \frac{\partial u}{\partial \nu} = \tau u & \text{on}\ \partial \Omega.
    \end{cases}
\end{equation*}
The classical Dirichlet-to-Neumann map $\Lambda$ (also known as the voltage-to-current map) is defined by $\Lambda(f) := \frac{\partial u}{\partial \nu}\big|_{\partial \Omega}$, where $u$ solves the Dirichlet problem: $\Delta_g u = 0$ in $\Omega$, $u = f$ on $\partial \Omega$. This problem is an eigenvalue problem with the spectral parameter in the boundary condition. The study of the spectrum of the map $\Lambda$ was initiated by Steklov \cite{Steklov02} in 1902. Notice that the Steklov spectrum coincides with that of the classical Dirichlet-to-Neumann map. Eigenvalues and eigenfunctions of the map $\Lambda$ have a number of applications in physics, such as fluid mechanics, heat transmission and vibration problems (see \cite{FoxKuttler83}, \cite{KopaKrein01}). The Steklov spectrum also plays a fundamental role in the mathematical analysis of photonic crystals (see \cite{Kuch01}). The classical Steklov eigenvalue problem has attracted a lot of attention in recent years.

It follows from a famous result of \cite{Sandgren55} that the eigenvalue counting function $N(\tau)$ of the classical Dirichlet-to-Neumann map satisfies the Sandgren’s asymptotic formula
\begin{equation}\label{1.4.2}
    N(\tau)
    = \#(k|\tau_{k} \leqslant \tau)
    = \frac{\operatorname{vol}(B^{n-1})}{(2\pi)^{n-1}} \operatorname{vol}(\partial \Omega) \tau^{n-1} + o(\tau^{n-1}) \quad \text{as}\ \tau \to +\infty,
\end{equation}
or equivalently,
\begin{equation*}
    \operatorname{Tr} e^{-t \Lambda} 
    = \sum_{k=1}^{\infty} e^{-t \tau_{k}} \sim \frac{\Gamma(n) \operatorname{vol}(B^{n-1})}{(2\pi)^{n-1}t^{n-1}} \operatorname{vol}(\partial \Omega) \quad \text{as}\ t \to 0^+, 
\end{equation*}
where $B^{n-1}$ is the $(n-1)$-dimensional unit ball. This shows that one can ``hear'' the volume $\operatorname{vol}(\partial \Omega)$ of the boundary from the first term of the asymptotic expansion above. Note that a sharp form of \eqref{1.4.2} was given by Liu \cite{Liu11}.

Generally, for the eigenvalues of the classical Dirichlet-to-Neumann map $\Lambda$ associated with the Laplace-Beltrami operator $\Delta_{g}$, the trace $\operatorname{Tr} e^{-t \Lambda}$ of the associated heat kernel admits an asymptotic expansion
\begin{equation*}
    \operatorname{Tr} e^{-t \Lambda} 
    = \sum_{k=1}^{\infty} e^{-t \tau_{k}} \sim \sum_{k=0}^{n-1} \tilde{a}_{k} t^{-n+k+1} + o(1)\quad \text{as}\ t \to 0^+,
\end{equation*}
where the coefficients $\tilde{a}_k = \int_{\partial \Omega} \tilde{a}_k(x) \,dS$ are spectral invariants of the classical Dirichlet-to-Neumann map. The first four coefficients $\tilde{a}_k(x)\ (k=0,1,2,3)$ have been obtained by Liu \cite{Liu15}. The first four relative heat invariants have been calculated in \cite{WangWang19} for the Schr\"{o}dinger operator $-\Delta_{g} + V$. For other geometric invariants, we refer the reader to Liu's results \cite{Liu19}, \cite{Liu_NL21} for Navier-Lam\'{e} operator, \cite{Liu_S21} for Stokes operator, and \cite{Liu16}, \cite{Liu11} for biharmonic operator. See also \cite{BranGilk90}, \cite{Gilkey04}, \cite{Gilkey95}, \cite{Gilkey75}, \cite{GilkeyGrubb98} for more results of this topic.

The classical Dirichlet-to-Neumann map is also connected with the famous Calder\'{o}n problem (see \cite{Cald80}), which is to determine the conductivity in the interior of a conductor by boundary measurements from the knowledge of the map $\Lambda$. After the seminal paper \cite{SylvUhlm87}, plenty of similar problems for elliptic equations have been intensively investigated (see \cite{Uhlmann14} for a survey). Such as the inverse boundary value problem for the magnetic Schrödinger operator, which is to determine $q$ and $A$ (up to a gauge transformation) from the knowledge of the magnetic Dirichlet-to-Neumann map in Euclidean spaces or Riemannian manifolds. Inverse problems for magnetic Schr\"{o}dinger operators have been studied by numerous authors in different settings (see \cite{BellChou10}, \cite{DKSU09}, \cite{DKSU07}, \cite{Eskin01}, \cite{KrupUhlm18}, \cite{KrupUhlm14}, \cite{NSU95}, \cite{PSU10}, \cite{Sun93}).

\addvspace{2mm}

It is natural to raise the following interesting open problem: what geometric information about the manifold can be obtained explicitly by providing all of the magnetic Steklov eigenvalues? Once this problem is solved, one can recover the geometry of a manifold from the spectrum of the magnetic Dirichlet-to-Neumann map. This problem is analogous to the well-known Kac’s problem \cite{Kac66} (i.e., is it possible to ``hear'' the shape of a domain just by ``hearing'' all of the eigenvalues of the Dirichlet Laplacian? See also \cite{Loren47}, \cite{Protter87}, \cite{Weyl12}). In this paper, we give an affirmative answer for this open problem mentioned above.

To obtain more geometric information about the manifold, we study the magnetic Steklov eigenvalue problem \eqref{1.4.1} and the asymptotic expansion of the trace $\operatorname{Tr} e^{-t \mathcal{M}}$ of the heat kernel associated with the magnetic Dirichlet-to-Neumann map $\mathcal{M}$ (the so-called ``heat kernel method''). The coefficients of the asymptotic expansion are spectral invariants which are metric invariants of the boundary $\partial \Omega$ of the manifold, and they contain a lot of geometric and topological information about the manifold (see \cite{Edward91}, \cite{Gilkey75}, \cite{Kac66}). Spectral invariants are of great importance in spectral geometry, they are also connected with many physical concepts (see \cite{ANPS09}, \cite{Full94}) and have increasingly extensive applications in physics since they describe corresponding physical phenomena. The more spectral invariants we get, the more geometric and topological information about the manifold we know. However, computations of spectral invariants are challenging problems in spectral geometry (see \cite{Gilkey75}, \cite{Grubb86}, \cite{Liu11}, \cite{SafarovVass97}). Let us point out that the complexity of the magnetic Schr\"{o}dinger operator directly lead to the calculations of some quantities (for example, spectral invariants) become much more difficult than that of Laplace-Beltrami operator. To the best of our knowledge, it has not previously been systematically applied in the context of the magnetic Steklov problem. By the theory of pseudodifferential operators and symbol calculus, we can calculate all the coefficients $a_0$, $a_1$, $\dots$, $a_{n-1}$ of the magnetic Dirichlet-to-Neumann map, and explicitly give the expressions for the first four coefficients.

\addvspace{2mm}

The main result of this paper is the following theorem.
\begin{theorem}\label{thm1.1}
    Suppose that $(\Omega,g)$ is an $n$-dimensional, smooth, compact Riemannian manifold with smooth boundary $\partial \Omega$. Let $\{\lambda_k\}_{k \geqslant 1}$ be the eigenvalues of the magnetic Dirichlet-to-Neumann map $\mathcal{M}$. Then, the trace of the heat kernel associated with the map $\mathcal{M}$ admits an asymptotic expansion
    \begin{equation*}
        \operatorname{Tr} e^{-t \mathcal{M}} 
        = \sum_{k=1}^{\infty} e^{-t \lambda_{k}} \sim \sum_{k=0}^{n-1} a_{k} t^{-n+k+1} + o(1)\quad \text{as}\ t \to 0^+, 
    \end{equation*}
    where the coefficients $a_k = \int_{\partial \Omega} a_k(x) \,dS$ are spectral invariants of the magnetic Dirichlet-to-Neumann map, which can be explicitly calculated for $0 \leqslant k \leqslant n-1$.
    
    In particular, the first four coefficients are given by
    \begin{align*}
        a_0(x) & = \frac{\Gamma(n-1)\operatorname{vol}(\mathbb{S}^{n-2})}{(2\pi)^{n-1}}, \quad n \geqslant 1; \\[1mm]
        a_1(x) & = \frac{(n-2)\Gamma(n-1)\operatorname{vol}(\mathbb{S}^{n-2})}{2(2\pi)^{n-1}(n-1)}H, \quad n \geqslant 2; \\[1mm]
        a_2(x) & = \frac{\Gamma(n-2)\operatorname{vol}(\mathbb{S}^{n-2})}{24(2\pi)^{n-1}(n^2-1)}
        \biggl[
            3(n^3-4n^2+n+8)H^2 + 3n(n-3) \sum_\alpha \kappa_\alpha^2 \\
            & \quad + 3(n+1)(n-2)\tilde{R} - (n+1)(n-4)R - 12(n^2-1)q + 12(n^2-1)k^2
        \biggr], \quad n \geqslant 3,
    \end{align*}
    and
    \begin{align*}
        a_3(x) & = \frac{\Gamma(n-3)\operatorname{vol}(\mathbb{S}^{n-2})}{48(2\pi)^{n-1}(n+3)(n^2-1)}
        \biggl(
            \alpha_{1}H^3 + \alpha_{2}H\tilde{R} + \alpha_{3}HR + \alpha_{4}H\sum_\alpha \kappa_\alpha^2  + \alpha_{5}\sum_\alpha \kappa_\alpha^3 \\
            & \quad + \alpha_{6}\sum_\alpha \kappa_\alpha \tilde{R}_{\alpha\alpha} + \alpha_{7}\sum_\alpha \kappa_\alpha R_{\alpha\alpha} + \alpha_{8} \nabla_{\frac{\partial }{\partial x_n}}\tilde{R}_{nn} + \alpha_{9}\frac{\partial q}{\partial x_{n}} + \alpha_{10}Hq + \alpha_{11}k^2H
        \biggr), \quad n \geqslant 4.
    \end{align*}
    Here $\kappa_\alpha$ are the principal curvatures of the boundary $\partial \Omega$, $H = \sum_\alpha \kappa_\alpha$ is the mean curvature of $\partial \Omega$, $R_{\alpha\alpha}$ are the components of the Ricci tensor of $\partial \Omega$, $R$ is the scalar curvature of $\partial \Omega$, $\tilde{R}_{\alpha\alpha}$ are the components of the Ricci tensor of $\Omega$, $\tilde{R}$ is the scalar curvature of $\Omega$, $\nabla_{\frac{\partial }{\partial x_n}}\tilde{R}_{nn}$ is the covariant derivative of $\tilde{R}_{nn}$ with respect to $\frac{\partial }{\partial x_n}$, and $\alpha_{k}\ (1 \leqslant k \leqslant 11)$ are as follows:
    \begin{align*}
        & \alpha_{1} = n^5-5n^4-10n^3+52n^2+2n-114,\\
        & \alpha_{2} = 3(n+3)(n^3-6n^2+2n+14),\\
        & \alpha_{3} = -(n+3)(3n^3-20n^2+12n+42),\\
        & \alpha_{4} = 3(n^4-n^3-12n^2+22n+6),\\
        & \alpha_{5} = -8(n-2)(n-3),\\
        & \alpha_{6} = 12(n+3)(n^2-3n+1),\\
        & \alpha_{7} = -4n(n+3)(3n-8),\\
        & \alpha_{8} = 6(n+1)(n+3)(n-2),\\
        & \alpha_{9} = -12(n+3)(n^2-1),\\
        & \alpha_{10} = -12(n+3)(n-4)(n^2-1),\\
        & \alpha_{11} = 12(n+3)(n-4)(n^2-1).
    \end{align*}
\end{theorem}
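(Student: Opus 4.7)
My plan is to identify $\mathcal{M}$ as a classical elliptic pseudodifferential operator of order one on $\partial\Omega$, to construct its full symbol recursively by a microlocal factorization of $\mathcal{L}_g$ in a collar of the boundary, and then to extract the coefficients of $\operatorname{Tr}e^{-t\mathcal{M}}$ from that symbol via the Seeley contour formula. In boundary normal coordinates $(x',x_n)$ with $\partial\Omega=\{x_n=0\}$, $g_{nn}\equiv 1$, $g_{n\alpha}\equiv 0$, the operator \eqref{1.2} takes the form
$$\mathcal{L}_g = -\frac{\partial^2}{\partial x_n^2} + E(x',x_n)\frac{\partial}{\partial x_n} + Q(x',x_n,D_{x'}),$$
where $E$ bundles the mean curvature of the parallel hypersurfaces with the normal component of $A$, and $Q$ is a tangential second-order operator incorporating the induced metric, the tangential $A$, and the full potential $V$ of \eqref{1.3}. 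I would then seek, modulo a smoothing remainder, a factorization $\mathcal{L}_g=(\partial_{x_n}+\mathcal{B}^-)(\partial_{x_n}-\mathcal{B}^+)+\mathcal{R}$ in which $\mathcal{B}^\pm(x',x_n,D_{x'})$ are one-parameter families of classical pseudodifferential operators of order one in $D_{x'}$. Matching total symbols produces a descending recursion whose leading step fixes $\mathcal{B}^+_1\big|_{x_n=0}=|\xi'|_g$ and whose lower steps determine each homogeneous piece $\mathcal{B}^+_{1-j}$ from previously computed pieces together with finitely many normal derivatives of the Taylor coefficients of $g$, $A$, and $V$ at the boundary. Since $0$ is not a Dirichlet eigenvalue, the unique solution of \eqref{1.4} satisfies $(\partial_{x_n}-\mathcal{B}^+)u\equiv 0$ modulo $C^\infty(\overline{\Omega})$, so \eqref{1.5} identifies $\mathcal{M}$, modulo a smoothing operator, with $-\mathcal{B}^+(0,x',D_{x'})+i\langle A,\nu\rangle$, giving an explicit algorithm for every homogeneous component of the symbol $q\sim q_1+q_0+q_{-1}+\cdots$ of $\mathcal{M}$.

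With that symbol in hand, I would apply
$$e^{-t\mathcal{M}} = \frac{1}{2\pi i}\int_\Gamma e^{-t\tau}(\mathcal{M}-\tau)^{-1}\,d\tau$$
together with the standard recursive construction of the resolvent symbol $(\mathcal{M}-\tau)^{-1}\sim\sum_j r_{-1-j}$, and integrate on the diagonal; after performing the $\tau$-integral and rescaling $\xi'\mapsto t^{-1}\xi'$, each homogeneous component of the resolvent symbol contributes exactly one power of $t$ between $t^{-(n-1)}$ and $t^0$, which yields the claimed expansion and expresses $a_k$ as an integral over $\partial\Omega$ of a universal polynomial in $q_1,\dots,q_{1-k}$, further reduced by angular integrations over the fibers $\mathbb{S}^{n-2}$.

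The main obstacle is the explicit computation of $q_0$, $q_{-1}$, $q_{-2}$ needed to obtain $a_1$, $a_2$, $a_3$ in closed form. To keep the recursion tractable I would, at each fixed boundary point, choose the tangential coordinates to be geodesic normal coordinates of $\partial\Omega$ at that point; this eliminates the first tangential derivatives of $g_{\alpha\beta}\big|_{x_n=0}$ and reduces the relevant Taylor coefficients of $g$ to the second fundamental form (producing $\kappa_\alpha$, $H$, $\sum_\alpha\kappa_\alpha^2$, $\sum_\alpha\kappa_\alpha^3$) and to Riemann tensor components of $\Omega$ and of $\partial\Omega$ (producing $R$, $\tilde R$, $R_{\alpha\alpha}$, $\tilde R_{\alpha\alpha}$, $\nabla_{\partial/\partial x_n}\tilde R_{nn}$). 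By gauge invariance of the magnetic Schr\"odinger operator, the contributions of $A$ to $\mathcal{B}^+_{-j}$ combine with the $|A|^2-i\operatorname{div}A$ piece of $V$ so that at the orders needed for $a_0,a_1,a_2,a_3$ only the scalar $q-k^2$ survives, in agreement with the statement. The remaining work is systematic angular integration over $\mathbb{S}^{n-2}$ using identities of the form $\int_{\mathbb{S}^{n-2}}\xi_\alpha\xi_\beta\,d\sigma=\frac{\operatorname{vol}(\mathbb{S}^{n-2})}{n-1}\delta_{\alpha\beta}$ and their fourth- and sixth-order tensor analogues, followed by collecting the surviving terms in the eleven-dimensional invariant basis listed in the statement to read off $\alpha_1,\dots,\alpha_{11}$.
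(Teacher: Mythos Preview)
Your proposal is correct and follows essentially the same route as the paper: boundary normal coordinates, a microlocal factorization of $\mathcal{L}_g$ to identify $\mathcal{M}$ (modulo smoothing) with the restriction of the tangential factor shifted by $iA_n$, a recursive construction of the resolvent symbol, and Seeley's contour formula together with the spherical-moment identities you list. The one noteworthy difference is organizational: the paper shortens the heavy steps by writing $a_k(x)=\tilde a_k(x)+a_k(A,q)$, quoting the purely geometric invariants $\tilde a_2,\tilde a_3$ from the classical Steklov case \cite{Liu15}, and computing only the corrections $a_k(A,q)$; the vanishing of all $A$-dependence that you predict by gauge invariance is obtained there by explicit symbol bookkeeping rather than by invoking the gauge principle, which is prudent since the paper allows $A$ to be complex-valued.
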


\addvspace{5mm}

\begin{remark}
    (\romannumeral 1) It follows from Theorem \ref{thm1.1} that the volume $\operatorname{vol}(\partial \Omega)$ of the boundary, the total mean curvature $\int_{\partial \Omega}H \,dS\ (n \geqslant 3)$ of the boundary, some other total curvatures as well as some physical quantities are all spectral invariants of the magnetic Dirichlet-to-Neumann map, and these invariants can also be obtained by the magnetic Steklov eigenvalues. This shows that one can ``hear'' all the coefficients $a_0$, $a_1$, $\dots$, $a_{n-1}$ (i.e., spectral invariants) by ``hearing'' all of the eigenvalues of the magnetic Dirichlet-to-Neumann map.

    \addvspace{2mm}

    (\romannumeral 2) By applying the Tauberian theorem (see, Theorem 15.3 of p.~30 of \cite{Kore04} or p.~107 of \cite{Taylor96.2}) for $a_0(x)$, we immediately get the Sandgren’s asymptotic formula \eqref{1.4.2}.

    \addvspace{2mm}

    (\romannumeral 3) When $n = 2$, we immediately get $a_1(x) = 0$, which generalize the result of J.~Edward and S.~Wu \cite{Edward91} for the classical Dirichlet-to-Neumann map associated with the Laplace operator for any simply connected bounded domain $\Omega \subset \mathbb{R}^2$.

    \addvspace{2mm}

    (\romannumeral 4) Theorem \ref{thm1.1} is a generalization of both \cite{Liu15} for Laplace-Beltrami operator $\Delta_{g}$ and \cite{WangWang19} for Schr\"{o}dinger operator $-\Delta_{g} + V$.
\end{remark}

\addvspace{5mm}

The expressions for $a_2(x)$ and $a_3(x)$ in Theorem \ref{thm1.1} can be simplified when the manifold $\Omega$ has constant sectional curvature. Thus, we have the following
\begin{corollary}\label{cor1.3}
    Assuming the manifold $\Omega$ has constant sectional curvature $K$, we then obtain
    \begin{align*}
        a_2(x) & = \frac{\Gamma(n-2)\operatorname{vol}(\mathbb{S}^{n-2})}{24(2\pi)^{n-1}(n^2-1)}
        \Bigl[
            3(n-2)(n^2-n-4)H^2 - 4(n^2-3n-1)R \\
            & \quad + 6n(n-2)(n-1)^2K - 12(n^2-1)q + 12(n^2-1)k^2
        \Bigr], \quad n \geqslant 3,
    \end{align*}
    and
    \begin{align*}
        a_3(x) & = \frac{\Gamma(n-3)\operatorname{vol}(\mathbb{S}^{n-2})}{48(2\pi)^{n-1}(n+3)(n^2-1)}
        \biggl(
            \alpha_{12}H^3 + \alpha_{13}HK + \alpha_{14}HR + \alpha_{15} \sum_{\alpha} \kappa_{\alpha}^3 \\
            & \quad + \alpha_{9}\frac{\partial q}{\partial x_{n}} + \alpha_{10}Hq + \alpha_{11}k^2H
        \biggr), \quad n \geqslant 4.
    \end{align*}
    Here $\alpha_{k}\ (9 \leqslant k \leqslant 11)$ are in Theorem \ref{thm1.1}, and $\alpha_{k}\ (12 \leqslant k \leqslant 15)$ are given by
    \begin{align*}
        & \alpha_{12} = n^5-2n^4-25n^3+12n^2+164n-96,\\
        & \alpha_{13} = 2n(n-2)(3n^4-12n^3-38n^2+108n-21),\\
        & \alpha_{14} = -2(3n^4-13n^3-44n^2+120n+72),\\
        & \alpha_{15} = 4(3n^3-n^2-14n-12).
    \end{align*}
\end{corollary}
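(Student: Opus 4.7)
The plan is to substitute the special curvature identities that hold on a manifold of constant sectional curvature $K$ into the expressions for $a_2(x)$ and $a_3(x)$ from Theorem \ref{thm1.1}, and to reorganize the result into the reduced basis of monomials $\{H^3, HK, HR, \sum_\alpha \kappa_\alpha^3, H^2, R, K, q, k^2, Hq, k^2 H, \partial q/\partial x_n\}$ appearing in the corollary.

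Concretely, on $(\Omega, g)$ with constant sectional curvature $K$ one has
\[
    \tilde{R}_{ijkl} = K(g_{ik}g_{jl} - g_{il}g_{jk}), \quad \tilde{R}_{ij} = (n-1)K\, g_{ij}, \quad \tilde{R} = n(n-1)K, \quad \nabla \tilde{R}_{ij} = 0,
\]
so in an orthonormal frame adapted to the boundary I get $\tilde{R}_{\alpha\alpha} = \tilde{R}_{nn} = (n-1)K$ and $\nabla_{\partial/\partial x_n}\tilde{R}_{nn} = 0$. Choosing the tangential frame $\{e_\alpha\}$ to diagonalize the shape operator with eigenvalues $\kappa_\alpha$, the Gauss equation delivers
\[
    R_{\alpha\alpha} = (n-2)K + H\kappa_\alpha - \kappa_\alpha^2, \qquad R = (n-1)(n-2)K + H^2 - \sum_\alpha \kappa_\alpha^2,
\]
together with $\sum_\alpha \kappa_\alpha \tilde{R}_{\alpha\alpha} = (n-1)KH$ and $\sum_\alpha \kappa_\alpha R_{\alpha\alpha} = (n-2)KH + H\sum_\alpha \kappa_\alpha^2 - \sum_\alpha \kappa_\alpha^3$. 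Solving the scalar-curvature identity for $\sum_\alpha \kappa_\alpha^2 = (n-1)(n-2)K + H^2 - R$ then allows every occurrence of $\sum_\alpha \kappa_\alpha^2$ to be replaced by a linear combination of $K$, $H^2$ and $R$.

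For $a_2(x)$ the substitution is direct: only the monomials $H^2$, $\sum_\alpha\kappa_\alpha^2$, $\tilde{R}$, $R$ appear, and after substituting and grouping coefficients of $H^2$, $R$ and $K$ one reads off the displayed formula. For $a_3(x)$ the same substitutions eliminate $\tilde{R}$, $\tilde{R}_{\alpha\alpha}$, $R_{\alpha\alpha}$, $\sum_\alpha\kappa_\alpha^2$ and $\nabla_{\partial/\partial x_n}\tilde{R}_{nn}$, leaving only $H^3$, $HK$, $HR$ and $\sum_\alpha \kappa_\alpha^3$ in the curvature part. The only genuine work, and the main (but routine) obstacle, is the polynomial bookkeeping that produces $\alpha_{12},\ldots,\alpha_{15}$: the $H^3$ coefficient receives contributions from $\alpha_1$, from $\alpha_4 H\sum_\alpha\kappa_\alpha^2$ and from $\alpha_7\sum_\alpha \kappa_\alpha R_{\alpha\alpha}$; the $HK$ coefficient from $\alpha_2 H\tilde{R}$, $\alpha_4 H\sum_\alpha\kappa_\alpha^2$, $\alpha_6\sum_\alpha \kappa_\alpha \tilde{R}_{\alpha\alpha}$ and $\alpha_7\sum_\alpha \kappa_\alpha R_{\alpha\alpha}$; the $HR$ coefficient from $\alpha_3$, $\alpha_4$ and $\alpha_7$; and the $\sum_\alpha \kappa_\alpha^3$ coefficient from $\alpha_5$ and $\alpha_7$. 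Collecting these and simplifying the resulting polynomials in $n$ produces the four displayed expressions for $\alpha_{12}$--$\alpha_{15}$, completing the derivation.
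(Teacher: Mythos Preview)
Your approach is correct and essentially identical to the paper's: both arguments substitute the constant-curvature identities $\tilde{R}=n(n-1)K$, $\tilde{R}_{\alpha\alpha}=(n-1)K$, $\nabla_{\partial/\partial x_n}\tilde{R}_{nn}=0$, together with the Gauss-equation consequences $R_{\alpha\alpha}=(n-2)K+H\kappa_\alpha-\kappa_\alpha^2$ and $\sum_\alpha\kappa_\alpha^2=(n-1)(n-2)K+H^2-R$, into the expressions for $a_2(x)$ and $a_3(x)$ from Theorem~\ref{thm1.1}, and then regroup the polynomial coefficients. The only cosmetic difference is that the paper verifies $\nabla_{\partial/\partial x_n}\tilde{R}_{nn}=0$ by computing $\Gamma^i_{nn}=0$ in boundary normal coordinates, whereas you invoke it directly from $\tilde{R}_{ij}=(n-1)Kg_{ij}$ being parallel.
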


\addvspace{5mm}

The main ideas of this paper are as follows. Since the magnetic Dirichlet-to-Neumann map $\mathcal{M}$ is a pseudodifferential operator defined on the boundary $\partial \Omega$ of the manifold $\Omega$, we study this kind of operator by the theory of pseudodifferential operators. There is an effective method (see \cite{Grubb86}, \cite{Hormander85.3}) to calculate its full symbol explicitly. By this method, we flat the boundary and induce a Riemannian metric in a neighborhood of the boundary. Thus, we obtain a local representation for the magnetic Dirichlet-to-Neumann map $\mathcal{M}(u|_{\partial \Omega}) = \bigl(-\frac{\partial u}{\partial x_n} - iA_n u\bigr)\big|_{\partial \Omega}$ in boundary normal coordinates, where $A_{n}$ is the $n$-th component of the vector field $A = \sum_{j} A_j \frac{\partial }{\partial x_j}$. We then look for the factorization
\begin{align*}
    -\mathcal{L}_{g}
    = \frac{\partial^2 }{\partial x_n^2} + B \frac{\partial }{\partial x_n} + C
    = \Bigl(\frac{\partial }{\partial x_n} + B - W\Bigr) \Bigl(\frac{\partial }{\partial x_n} + W\Bigr),
\end{align*}
where $B$, $C$ are two differential operators, and $W$ is a pseudodifferential operator. As a result, we obtain the equation
\begin{align*}
    W^2 - BW - \Bigl[\frac{\partial }{\partial x_n},W\Bigr] + C = 0,
\end{align*}
where $[\cdot,\cdot]$ is the Lie bracket. Next, we solve the corresponding full symbol equation
\begin{align*}
    \sum_{J} \frac{(-i)^{|J|}}{J !} \partial_{\xi^{\prime}}^{J}\!w \, \partial_{x^\prime}^{J}\!w - bw - \frac{\partial w}{\partial x_n} + c = 0,
\end{align*}
where the sum is over all multi-indices $J$. Here $b$, $c$ and $w$ are the full symbols of $B$, $C$ and $W$, respectively. Hence, we get the full symbol of $W$, which implies that $W$ has been obtained on $\partial \Omega$ modulo a smoothing operator. It follows from the results of \cite{Agra87}, \cite{DuisGuill75}, \cite{GrubbSeeley95} and \cite{Liu15} that the trace $\operatorname{Tr} e^{-t \mathcal{M}}$ of the associated heat kernel $\mathcal{K}(t,x^{\prime},y^{\prime})$ admits an asymptotic expansion
\begin{align*}
    \sum_{k=1}^{\infty} e^{-t \lambda_{k}}
    & = \int_{\partial \Omega} \mathcal{K}(t,x^{\prime},x^{\prime}) \,dS \\
    & = \int_{\partial \Omega}
    \bigg\{ 
        \frac{1}{(2\pi)^{n-1}} \int_{T^{*}(\partial \Omega)} e^{i \langle x^{\prime} - x^{\prime},\xi^{\prime} \rangle}
        \bigg[ 
            \frac{i}{2\pi}\int_{\mathcal{C}} e^{-t\tau}(\mathcal{M}-\tau)^{-1} \,d\tau
        \bigg] \,d\xi^{\prime} 
    \bigg\} \,dS \\
    & = \int_{\partial \Omega} 
    \bigg\{ 
        \frac{1}{(2\pi)^{n-1}} \int_{\mathbb{R}^{n-1}}
        \bigg[
            \frac{i}{2\pi}\int_{\mathcal{C}} e^{-t\tau} 
            \bigg( \sum_{j \leqslant -1} s_{j}(x,\xi^{\prime},\tau) \bigg) \,d\tau 
        \bigg] \,d\xi^{\prime}
    \bigg\} \,dS \\
    & \sim \sum_{k=0}^{\infty} a_k t^{-n+k+1} + \sum_{l=0}^{\infty} b_{l} t^l \log t \quad \text{as}\ t \to 0^{+},
\end{align*}
where $\mathcal{C}$ be a contour around the positive real axis, and $\sum_{j \leqslant -1} s_{j}(x,\xi^{\prime},\tau)$ is the full symbol of the pseudodifferential operator $(\mathcal{M}-\tau)^{-1}$. By lots of complicated calculations, we finally establish an effective procedure to calculate all the coefficients $a_0$, $a_1$, $\dots$, $a_{n-1}$ of the heat trace asymptotic expansion and explicitly give the first four coefficients.

This paper is organized as follows. In section 2 we briefly introduce some basic concepts in Riemannian geometry, and give the expression for the magnetic Dirichlet-to-Neumann map $\mathcal{M}$ in boundary normal coordinates. In section 3 we derive the factorization of the magnetic Schr\"{o}dinger operator $\mathcal{L}_{g}$, and get the full symbols of the pseudodifferential operators $\mathcal{M}$ and $(\mathcal{M}-\tau)^{-1}$. Finally, section 4 is devoted to prove Theorem \ref{thm1.1} and Corollary \ref{cor1.3}.

\addvspace{10mm}

\section{Preliminaries}

Let $(\Omega,g)$ be an $n$-dimensional, smooth, compact Riemannian manifold with smooth boundary $\partial \Omega$. In the local coordinates $\{x_i\}_{i=1}^n$, we denote by $\bigl\{\frac{\partial}{\partial x_i}\bigr\}_{i=1}^n$ a natural basis for tangent space $T_x \Omega$ at the point $x \in \Omega$. In what follows, we will let Greek indices run from 1 to $n-1$, whereas Roman indices from 1 to $n$, unless otherwise specified.

In the local coordinates, the Riemannian metric $g$ is given by
\begin{align*}
    g = \sum_{i,j} g_{ij} \,dx_i \otimes dx_j.
\end{align*}
For $X=\sum_{i} X_i \frac{\partial}{\partial x_i}$, $Y=\sum_{i} Y_i \frac{\partial}{\partial x_i} \in \mathfrak{X}(\Omega)$, the inner product with respect to the Riemannian metric $g$ is denoted by
\begin{equation*}
    \langle X,Y \rangle = \sum_{i,j} g_{ij} X_i Y_j.
\end{equation*}
The divergence operator on the Riemannian manifold, in the local coordinates, is denoted by
\begin{equation*}
    \operatorname{div} X 
    = \frac{1}{\sqrt{\operatorname{det}g}} \sum_i \frac{\partial}{\partial x_i} \bigl(\sqrt{\operatorname{det}g}X_{i}\bigr),
\end{equation*}
and the gradient operator is denoted by
\begin{equation*}
    \operatorname{grad} u 
    = \sum_{i,j} g^{ij} \frac{\partial u}{\partial x_i} \frac{\partial}{\partial x_j} \quad \text{for}\ u \in C^{\infty}(\Omega),
\end{equation*}
where $(g^{ij}) = (g_{ij})^{-1}$. Thus, one can define the Laplace-Beltrami operator as
\begin{equation*}
    \Delta_{g} 
    := \operatorname{div} \operatorname{grad} 
    = \frac{1}{\sqrt{\operatorname{det}g}} \sum_{i,j} \frac{\partial}{\partial x_i} \Bigl(\sqrt{\operatorname{det}g} g^{ij} \frac{\partial}{\partial x_j}\Bigr).
\end{equation*}

\addvspace{2mm}

Let $\nabla$ be the Levi-Civita connection. The curvature tensor is defined by
\begin{align*}
    \mathcal{R}(X,Y)Z := [\nabla_{X},\nabla_{Y}]Z - \nabla_{[X,Y]}Z
\end{align*}
for $X,Y,Z \in \mathfrak{X}(\Omega)$. The components of the curvature tensor are given by
\begin{align*}
    \mathcal{R} \Bigl(\frac{\partial }{\partial x_i},\frac{\partial }{\partial x_j}\Bigr) \frac{\partial }{\partial x_k}
    = \sum_{l} R^{l}_{ijk} \frac{\partial }{\partial x_l},
\end{align*}
where
\begin{align*}
    R^{l}_{ijk} 
    = \frac{\partial \Gamma^{l}_{jk}}{\partial x_i} - \frac{\partial \Gamma^{l}_{ik}}{\partial x_j} + \sum_{p} \Gamma^{p}_{jk} \Gamma^{l}_{ip} - \sum_{p} \Gamma^{p}_{ik} \Gamma^{l}_{jp},
\end{align*}
and the Christoffel symbols are given by
\begin{align}\label{chris}
    \Gamma^{i}_{jk} 
    = \frac{1}{2} \sum_{l} g^{kl}
    \Bigl(\frac{\partial g_{jl}}{\partial x_i} + \frac{\partial g_{il}}{\partial x_j} - \frac{\partial g_{ij}}{\partial x_l}\Bigr).
\end{align}
The components of the Riemann curvature tensor are defined by
\begin{align*}
    R_{ijkl}
    := \Big\langle 
    \mathcal{R} \Bigl(\frac{\partial }{\partial x_i},\frac{\partial }{\partial x_j}\Bigr) \frac{\partial }{\partial x_k},\frac{\partial }{\partial x_l} 
    \Big\rangle
    = \sum_{m} g_{lm} R^{m}_{ijk}.
\end{align*}
Some basic symmetries of the Riemann curvature tensor are
\begin{align*}
    R_{ijkl} = -R_{jikl} = -R_{ijlk} = R_{klij}.
\end{align*}
The Ricci tensor is the trace of the Riemann curvature tensor:
\begin{align*}
    \operatorname{Ric} (X,Y) := \operatorname{Tr} (Z \mapsto \mathcal{R} (Z,X)Y)
\end{align*}
for $X,Y,Z \in \mathfrak{X}(\Omega)$. The components of the the Ricci tensor are given by
\begin{align}\label{ricci}
    R_{ij} = \sum_{k,l} g^{kl} R_{iklj}.
\end{align}
For later reference, we write out the covariant derivative of the Ricci tensor as follows:
\begin{align}\label{covariant}
    \nabla_{\frac{\partial }{\partial x_i}} R_{jk}
    = \frac{\partial R_{jk}}{\partial x_i} - \sum_{l} \Gamma^{l}_{ij}R_{lk} - \sum_{l} \Gamma^{l}_{ik}R_{jl}.
\end{align}
The scalar curvature is the trace of the Ricci tensor, that is,
\begin{align}\label{scalar}
    R = \sum_{i,j} g^{ij} R_{ij} = \sum_{i,j,k,l} g^{ij} g^{kl} R_{iklj}.
\end{align}

\addvspace{2mm}

Here we briefly introduce the construction of geodesic coordinates with respect to the boundary (see \cite{LeeUhlm89} or p.~532 of \cite{Taylor96.2}). For each boundary point $x_{0} \in \partial \Omega$, let $\gamma_{x_{0}}:[0,\epsilon)\to \bar{\Omega}$ denote the unit-speed geodesic starting at $x_{0}$ and normal to $\partial \Omega$. If $x^{\prime} = \{x_{1}, \ldots, x_{n-1}\}$ are any local coordinates for $\partial \Omega$ near $x_{0} \in \partial \Omega$, we can extend them smoothly to functions on a neighborhood of $x_{0}$ in $\bar{\Omega}$ by letting them be constant along each normal geodesic $\gamma_{x_{0}}$. If we then define $x_n$ to be the parameter along each $\gamma_{x_{0}}$, it follows easily that $\{x_{1}, \ldots, x_{n}\}$ form coordinates for $\bar{\Omega}$ in some neighborhood of $x_{0}$, which we call the boundary normal coordinates determined by $\{x_{1}, \ldots, x_{n-1}\}$. In these coordinates $x_n>0$ in $\Omega$, and $\partial \Omega$ is locally characterized by $x_n=0$. A standard computation shows that the metric $g$ then has the form
\begin{equation*}
    g = \sum_{\alpha,\beta} g_{\alpha\beta} \,dx_{\alpha} \,dx_{\beta} + (dx_{n})^{2}.
\end{equation*}

Furthermore, we can choose a frame that diagonalizes the second fundamental form $h$, such that the $(\alpha,\beta)$ entry of the matrix of the second fundamental form at the point $x_{0}$ is
\begin{align*}
    h_{\alpha\beta}(x_{0})
    := \Big\langle \nabla_{\frac{\partial }{\partial x_{\alpha}}} \nu, \frac{\partial }{\partial x_{\beta}} \Big\rangle
    = -\Gamma^{\beta}_{n\alpha}
    = -\frac{1}{2}\frac{\partial g_{\alpha\beta}}{\partial x_n}.
\end{align*}
Thus, in these coordinates, for $1 \leqslant j,k \leqslant n$ and $1 \leqslant \alpha,\beta \leqslant n-1$, one has
\begin{align*}
    & g_{jk}(x_{0}) = g^{jk}(x_{0}) = \delta_{jk}, \\
    & \frac{\partial g_{jk}}{\partial x_\alpha}(x_{0}) = \frac{\partial g^{jk}}{\partial x_\alpha}(x_{0}) = 0, \\
    & \frac{\partial g_{\alpha\beta}}{\partial x_n}(x_{0}) = -\frac{\partial g^{\alpha\beta}}{\partial x_n}(x_{0}) = -2\kappa_\alpha \delta_{\alpha\beta},
\end{align*}
where $\delta_{jk}$ is the Kronecker delta. We then have $h_{\alpha\beta}(x_{0}) = \kappa_{\alpha}\delta_{\alpha\beta}$. It follows immediately from \eqref{1.5} that the magnetic Dirichlet-to-Neumann map $\mathcal{M}$ has the form
\begin{equation*}
    \mathcal{M}(f) 
    = \Bigl(-\frac{\partial u}{\partial x_n} - iA_n u\Bigr)\Big|_{\partial \Omega},
\end{equation*}
where $A_{n}$ is the $n$-th component of the vector field $A = \sum_{j} A_j \frac{\partial }{\partial x_j}$.

\addvspace{2mm}

We recall some concepts of pseudodifferential operators and symbols. Assuming $U \subset \mathbb{R}^n$ and $m \in \mathbb{R}$, we define $S^{m}_{1,0} = S^{m}_{1,0}(U,\mathbb{R}^n)$ to consist of $C^{\infty}$-functions $p(x,\xi)$ satisfying for every compact set $Q \subset U$,
\begin{align*}
    |D_{x}^{\beta}D_{\xi}^{\alpha}p(x,\xi)| 
    \leqslant C_{Q,\alpha,\beta} \langle \xi \rangle ^{m-|\alpha|}, \quad x \in Q,\ \xi \in \mathbb{R}^n
\end{align*}
for all $\alpha,\beta \in \mathbb{N}^{n}_{+}$, where $D^{\alpha} = D^{\alpha_1} \cdots D^{\alpha_n}$, $D_{j} = \frac{1}{i} \frac{\partial }{\partial x_{j}}$ and $\langle \xi \rangle = (1 + |\xi|^2)^{1/2}$. The elements of $S^{m}_{1,0}$ are called symbols of order $m$. It is clear that $S^{m}_{1,0}$ is a Fr\'{e}chet space with semi-norms given by
\begin{align*}
    \|p\|_{Q,\alpha,\beta} 
    := \sup_{x \in J}
    \big| \big( D_{x}^{\beta}D_{\xi}^{\alpha}p(x,\xi) \big) (1 + |\xi|)^{-m+|\alpha|} \big|.
\end{align*}
Let $p(x,\xi) \in S^{m}_{1,0}$ and $\hat{u}(\xi) = \int_{\mathbb{R}^n}e^{-iy \cdot \xi} u(y) \,dy$ be the Fourier transform of $u$. A pseudodifferential operator in an open set $U$ is essentially defined by a Fourier integral operator
\begin{align*}
    P(x,D)u(x) 
    = \frac{1}{(2\pi)^n}\int_{\mathbb{R}^n}p(x,\xi)e^{ix \cdot \xi} \hat{u}(\xi) \,d\xi
\end{align*}
for $u \in C^{\infty}_{0}(U)$. In such a case we say the associated operator $P(x,D)$ belongs to $OPS^{m}$. If there are smooth $p_{m−j}(x,\xi)$, homogeneous in $\xi$ of degree $m-j$ for $|\xi| \geqslant 1$, that is, $p_{m-j}(x,r\xi) = r^{m-j}p_{m-j}(x,\xi)$, and if
\begin{align}\label{symbol}
    p(x,\xi) \sim \sum_{j \geqslant 0}p_{m-j}(x,\xi)
\end{align}
in the sense that
\begin{align*}
    p(x,\xi) - \sum_{j = 0}^{N} p_{m-j}(x,\xi) \in S_{1,0}^{m-N-1}
\end{align*}
for all $N$, then we say $p(x,\xi) \in S_{cl}^{m}$, or just $p(x,\xi) \in S^{m}$. We call $p_{m}(x,\xi)$ the principal symbol of $P(x,D)$. We say $P(x,D) \in OPS^{m}$ is elliptic of order $m$ if on each compact $Q \subset U$ there are constants $C_{Q}$ and $r < \infty$ such that
\begin{align*}
    |p(x,\xi)^{-1}| \leqslant C_{Q} \langle \xi \rangle ^{-m}, \quad |\xi| \geqslant r.
\end{align*}

We can now define a pseudodifferential operator on a manifold $\Omega$. Namely
\begin{align*}
    P:C_{0}^{\infty}(\Omega) \to C^{\infty}(\Omega)
\end{align*}
belongs to $OPS^{m}_{1,0}(\Omega)$ if the kernel of $P$ is smooth off the diagonal in $\Omega \times \Omega$ and if for any coordinate neighborhood $U \subset \Omega$ with $\Phi:U \to \mathcal{O}$ a diffeomorphism onto an open subset $\mathcal{O} \subset \mathbb{R}^{n}$, the map $\tilde{P}:C_{0}^{\infty}(\mathcal{O}) \to C^{\infty}(\mathcal{O})$ given by
\begin{align*}
    \tilde{P}u := P(u \circ \Phi) \circ \Phi^{-1}
\end{align*}
belongs to $OPS^{m}_{1,0}(\mathcal{O})$. We refer the reader to \cite{Grubb86}, \cite{Hormander85.3}, \cite{Taylor96.2} and \cite{Taylor81} for more details.

\addvspace{10mm}

\section{Symbols of Pseudodifferential Operators}

In this section, we will derive the factorization of the magnetic Schr\"{o}dinger operator $\mathcal{L}_{g}$, and calculate the full symbols of the pseudodifferential operators $\mathcal{M}$ and $(\mathcal{M}-\tau)^{-1}$.

In boundary normal coordinates, we can write the Laplace-Beltrami operator as
\begin{align*}
    \Delta_g
    = \frac{\partial^2 }{\partial x_n^2} + \frac{1}{2}\sum_{\alpha,\beta} g^{\alpha\beta}\frac{\partial g_{\alpha\beta}}{\partial x_n} \frac{\partial }{\partial x_n} + \sum_{\alpha,\beta} g^{\alpha\beta} \frac{\partial^2}{\partial x_\alpha\partial x_\beta}  + \sum_{\alpha,\beta}
    \biggl(
        \frac{1}{2} g^{\alpha\beta} \sum_{\gamma,\rho} g^{\gamma\rho} \frac{\partial g_{\gamma\rho}}{\partial x_\alpha} + \frac{\partial g^{\alpha\beta}}{\partial x_\alpha}
    \biggr)
    \frac{\partial }{\partial x_\beta}.
\end{align*}
It follows from this and \eqref{1.2} that
\begin{align}\label{2.2}
    -\mathcal{L}_{g} 
    = \frac{\partial^2 }{\partial x_n^2} + B \frac{\partial }{\partial x_n} + C,
\end{align}
where $C = C_2 + C_1 + C_0$, and
\begin{align*}
    & B = \frac{1}{2}\sum_{\alpha,\beta} g^{\alpha\beta} \frac{\partial g_{\alpha\beta}}{\partial x_n} + 2iA_n, \\
    & C_2 = \sum_{\alpha,\beta} g^{\alpha\beta} \frac{\partial^2}{\partial x_\alpha\partial x_\beta}, \\
    & C_1 = \sum_{\alpha,\beta}
    \biggl(
        \frac{1}{2} g^{\alpha\beta} \sum_{\gamma,\rho} g^{\gamma\rho} \frac{\partial g_{\gamma\rho}}{\partial x_\alpha} + \frac{\partial g^{\alpha\beta}}{\partial x_\alpha}
    \biggr)
    \frac{\partial }{\partial x_\beta} + 2i\sum_\alpha A_\alpha\frac{\partial }{\partial x_\alpha}, \\
    & C_0 = -V.
\end{align*}

\addvspace{2mm}

We then derive the microlocal factorization of the magnetic Schr\"{o}dinger operator.
\begin{proposition}\label{prop2.1}
    There exists a pseudodifferential operator $W(x,D_{x^\prime})$ of order one in $x^\prime$ depending smoothly on $x_n$ such that
    \begin{align*}
        -\mathcal{L}_{g} 
        = \Bigl(\frac{\partial }{\partial x_n} + B - W\Bigr)\Bigl(\frac{\partial }{\partial x_n} + W\Bigr)
    \end{align*}
    modulo a smoothing operator.
\end{proposition}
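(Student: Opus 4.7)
The plan is to look for $W$ as a classical pseudodifferential operator of order one in $x'$, depending smoothly on $x_n$, whose full symbol has an asymptotic expansion into pieces of decreasing homogeneity, and to determine each piece from an operator equation obtained by expanding the proposed factorization. First I would formally expand
\[
    \Bigl(\frac{\partial}{\partial x_n} + B - W\Bigr)\Bigl(\frac{\partial}{\partial x_n} + W\Bigr)
    = \frac{\partial^2}{\partial x_n^2} + B\frac{\partial}{\partial x_n} + \Bigl[\frac{\partial}{\partial x_n},W\Bigr] + BW - W^2,
\]
and demand equality with $-\mathcal{L}_g = \frac{\partial^2}{\partial x_n^2} + B\frac{\partial}{\partial x_n} + C$ modulo a smoothing operator. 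This yields the Riccati-type operator equation
\[
    W^2 - BW - \Bigl[\frac{\partial}{\partial x_n},W\Bigr] + C \equiv 0.
\]

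Next I would pass to full symbols in the $x'$ variables, treating $x_n$ as a smooth parameter. Writing $w(x,\xi') \sim \sum_{j\geqslant 0} w_{1-j}(x,\xi')$ with $w_{1-j}$ positively homogeneous of degree $1-j$ in $\xi'$, and similarly expanding $b$ and $c = c_2 + c_1 + c_0$, the composition formula for pseudodifferential operators converts the operator equation into the full symbol equation
\[
    \sum_{J} \frac{(-i)^{|J|}}{J!} \partial_{\xi'}^{J}w \, \partial_{x'}^{J}w - bw - \frac{\partial w}{\partial x_n} + c = 0.
\]
Collecting terms of homogeneity degree $2-k$ on the left produces one equation for each integer $k \geqslant 0$.

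The top-degree equation is purely algebraic: $w_1^2 + c_2 = 0$. Since $c_2(x,\xi') = -\sum_{\alpha,\beta} g^{\alpha\beta}\xi_\alpha\xi_\beta$ is strictly negative on $\xi' \neq 0$, I would set
\[
    w_1(x,\xi') = \sqrt{\textstyle\sum_{\alpha,\beta} g^{\alpha\beta}\xi_\alpha\xi_\beta} = |\xi'|_g,
\]
choosing the positive root so that $\partial_n + W$ annihilates the solution decaying into the interior (and so $W$ recovers the correct principal symbol of the Dirichlet-to-Neumann map). For each $k \geqslant 1$, the degree-$(2-k)$ component then has the schematic form
\[
    2 w_1 \, w_{1-k} = F_k\bigl(w_1, w_0, \ldots, w_{2-k};\, b, c_2, c_1, c_0\bigr),
\]
where $F_k$ is a polynomial in the already-constructed symbols together with their $x$- and $\xi'$-derivatives. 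Because $w_1$ is homogeneous of degree one and nonvanishing away from $\xi' = 0$, division by $2w_1$ uniquely determines $w_{1-k}$, and the recursion produces the entire asymptotic symbol.

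Finally I would invoke Borel's theorem to assemble an actual symbol $w \in S^{1}_{1,0}$ whose asymptotic expansion is $\sum_{j} w_{1-j}$; the associated pseudodifferential operator $W$ then satisfies the Riccati equation modulo a smoothing remainder, which is precisely the factorization claimed. The main obstacle is the careful bookkeeping at the recursion step: one must verify that at each order the composition formula really does produce the factor $2w_1$ on the unknown $w_{1-k}$ with all forcing terms involving strictly higher-order symbols, keeping straight the three simultaneous contributions coming from $W^2$, from $BW$, and from the $x_n$-derivative $\frac{\partial w}{\partial x_n}$. The rest of the argument is a routine, if lengthy, application of the pseudodifferential symbol calculus.
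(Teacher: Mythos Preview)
Your proposal is correct and follows essentially the same approach as the paper: derive the Riccati-type operator equation $W^2 - BW - [\partial_{x_n},W] + C = 0$, pass to full symbols to obtain $\sum_J \frac{(-i)^{|J|}}{J!}\partial_{\xi'}^J w\,\partial_{x'}^J w - bw - \partial_{x_n}w + c = 0$, solve $w_1^2 = -c_2$ for the principal symbol, and then recursively solve $2w_1 w_{1-k} = F_k$ for the lower-order pieces. The paper carries out the recursion explicitly through $w_{-2}$ and states the general formula for $w_{-1-m}$, whereas you summarize the recursion schematically and make the Borel summation step explicit; these are cosmetic rather than substantive differences.
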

\begin{proof}
    It follows from \eqref{2.2} that
    \begin{align*}
        \frac{\partial^2 }{\partial x_n^2} + B \frac{\partial }{\partial x_n} + C 
        = \Bigl(\frac{\partial }{\partial x_n} + B - W\Bigr)\Bigl(\frac{\partial }{\partial x_n} + W\Bigr),
    \end{align*}
    that is,
    \begin{align}\label{2.4}
        W^2 - BW - \Bigl[\frac{\partial }{\partial x_n},W\Bigr] + C = 0,
    \end{align}
    where the commutator $\big[\frac{\partial }{\partial x_n},W\big]$ is defined by
    \begin{align*}
        \Bigl[\frac{\partial }{\partial x_n},W\Bigr]v
        & = \frac{\partial }{\partial x_n}(Wv) - W\frac{\partial }{\partial x_n}v \\
        & = \frac{\partial W}{\partial x_n}v
    \end{align*}
    for any smooth function $v$.

    Recall that if $F$ and $G$ are two pseudodifferential operators with full symbols $f$ and $g$, respectively, then the full symbol $\sigma(FG)$ of $FG$ is given by (see p.~11 of  \cite{Taylor96.2}, or p.~71 of \cite{Hormander85.3}, see also \cite{Grubb86}, \cite{Treves80})
    \begin{align*}
        \sigma(FG)\sim \sum_{J} \frac{(-i)^{|J|}}{J !} \partial_{\xi}^{J}\!f \, \partial_{x}^{J}\!g,
    \end{align*}
    where the sum is over all multi-indices $J$. Let $w = w(x,\xi^{\prime})$ be the full symbol of $W$, from \eqref{symbol} we write
    \begin{align*}
        w(x,\xi^{\prime}) \sim \sum_{j\leqslant 1} w_j(x,\xi^{\prime}),
    \end{align*}
    with $w_j(x,\xi^{\prime})$ homogeneous of degree $j$ in $\xi^{\prime}$. Let $b(x,\xi^{\prime})$ and $c(x,\xi^{\prime}) = c_2(x,\xi^{\prime}) + c_1(x,\xi^{\prime}) + c_0(x,\xi^{\prime})$ be the full symbols of $B$ and $C$, respectively. We then obtain
    \begin{align*}
        & b(x,\xi^{\prime}) = \frac{1}{2}\sum_{\alpha,\beta} g^{\alpha\beta} \frac{\partial g_{\alpha\beta}}{\partial x_n} + 2iA_n,\\
        & c_2(x,\xi^{\prime}) = -\sum_{\alpha,\beta} g^{\alpha\beta} \xi_\alpha \xi_\beta := -|\xi^{\prime}|^2,\\
        & c_1(x,\xi^{\prime}) = i\sum_{\alpha,\beta}
        \biggl(
            \frac{1}{2} g^{\alpha\beta} \sum_{\gamma,\rho} g^{\gamma\rho} \frac{\partial g_{\gamma\rho}}{\partial x_\alpha} + \frac{\partial g^{\alpha\beta}}{\partial x_\alpha}
        \biggr)
        \xi_\beta - 2\sum_\alpha A_\alpha \xi_\alpha, \\
        & c_0(x,\xi^{\prime}) = -V.
    \end{align*}
    Noting that $\partial_{\xi^{\prime}}^{J} b(x,\xi^{\prime}) = 0$ for all $|J|>0$, we conclude that the full symbol equation of \eqref{2.4} is
    \begin{equation}\label{2.5}
        \sum_{J} \frac{(-i)^{|J|}}{J !} \partial_{\xi^{\prime}}^{J}\!w \, \partial_{x^\prime}^{J}\!w - bw - \frac{\partial w}{\partial x_n} + c = 0.
    \end{equation}

    We shall determine the $w_j$ recursively so that \eqref{2.5} holds modulo $S^{-\infty}$. Grouping the homogeneous terms of degree two in \eqref{2.5}, which implies
    \begin{align*}
        w_1^2 + c_2 = 0,
    \end{align*}
    so we can choose
    \begin{align}\label{2.6}
        w_1 = |\xi^{\prime}|.
    \end{align}
    The terms of degree one in \eqref{2.5} are
    \begin{align*}
        2w_1w_0 - i\sum_\alpha\frac{\partial w_1}{\partial \xi_\alpha}\frac{\partial w_1}{\partial x_\alpha} - bw_1 - \frac{\partial w_1}{\partial x_n} + c_1 = 0,
    \end{align*}
    so that
    \begin{align}\label{2.7}
        w_0 
        = \frac{1}{2}w_1^{-1}
        \biggl(
            i\sum_\alpha\frac{\partial w_1}{\partial \xi_\alpha}\frac{\partial w_1}{\partial x_\alpha} + bw_1 + \frac{\partial w_1}{\partial x_n} - c_1
        \biggr).
    \end{align}
    The terms of degree zero in \eqref{2.5} are
    \begin{align*}
        2w_1w_{-1} + w_0^2 - i\sum_\alpha
        \Bigl(
            \frac{\partial w_1}{\partial \xi_\alpha}\frac{\partial w_0}{\partial x_\alpha} + \frac{\partial w_0}{\partial \xi_\alpha}\frac{\partial w_1}{\partial x_\alpha}
        \Bigr)
        - \frac{1}{2}\sum_{\alpha,\beta} \frac{\partial^2 w_1}{\partial \xi_{\alpha}\partial \xi_{\beta}}\frac{\partial^2 w_1}{\partial x_{\alpha}\partial x_{\beta}} - bw_0 - \frac{\partial w_0}{\partial x_n}  + c_0 = 0,
    \end{align*}
    i.e.,
    \begin{align}\label{2.8}
        w_{-1} 
        & = \frac{1}{2}w_1^{-1}
        \bigg[
            -w_0^2 + i\sum_\alpha
            \Bigl(
                \frac{\partial w_1}{\partial \xi_\alpha}\frac{\partial w_0}{\partial x_\alpha} + \frac{\partial w_0}{\partial \xi_\alpha}\frac{\partial w_1}{\partial x_\alpha}
            \Bigr) 
            + \frac{1}{2}\sum_{\alpha,\beta} \frac{\partial^2 w_1}{\partial \xi_{\alpha}\partial \xi_{\beta}}\frac{\partial^2 w_1}{\partial x_{\alpha}\partial x_{\beta}} \\
            & \quad + bw_0 + \frac{\partial w_0}{\partial x_n} - c_0 \notag
        \bigg].
    \end{align}
    The terms of degree $-1$ in \eqref{2.5} are
    \begin{align*}
        & 2w_1w_{-2} + 2w_0w_{-1} - i\sum_\alpha
        \Bigl(
            \frac{\partial w_1}{\partial \xi_\alpha}\frac{\partial w_{-1}}{\partial x_\alpha}+\frac{\partial w_0}{\partial \xi_\alpha}\frac{\partial w_0}{\partial x_\alpha} +\frac{\partial w_{-1}}{\partial \xi_\alpha}\frac{\partial w_1}{\partial x_\alpha}
        \Bigr) \\
        & - \frac{1}{2} \sum_{\alpha,\beta}
        \Bigl(
            \frac{\partial^2 w_1}{\partial \xi_{\alpha}\partial \xi_{\beta}} \frac{\partial^2 w_0}{\partial x_{\alpha}\partial x_{\beta}} + \frac{\partial^2 w_0}{\partial \xi_{\alpha}\partial \xi_{\beta}} \frac{\partial^2 w_1}{\partial x_{\alpha}\partial x_{\beta}}
        \Bigr) + \frac{i}{6}\sum_{\alpha,\beta,\gamma} \frac{\partial^3 w_1}{\partial\xi_{\alpha}\partial\xi_{\beta}\partial\xi_{\gamma}}\frac{\partial^3 w_1}{\partial x_{\alpha}\partial x_{\beta}\partial x_{\gamma}} \\
        & - bw_{-1} - \frac{\partial w_{-1}}{\partial x_n} = 0,
    \end{align*}
    we obtain
    \begin{align}\label{2.9}
        w_{-2} 
        & = \frac{1}{2}w_1^{-1}
        \biggl[
            -2w_0w_{-1} + i\sum_\alpha
            \Bigl(
                \frac{\partial w_1}{\partial \xi_\alpha}\frac{\partial w_{-1}}{\partial x_\alpha}+\frac{\partial w_0}{\partial \xi_\alpha}\frac{\partial w_0}{\partial x_\alpha} +\frac{\partial w_{-1}}{\partial \xi_\alpha}\frac{\partial w_1}{\partial x_\alpha}
            \Bigr) \\
            & \quad + \frac{1}{2} \sum_{\alpha,\beta}
            \Bigl(
                \frac{\partial^2 w_1}{\partial \xi_{\alpha}\partial \xi_{\beta}} \frac{\partial^2 w_0}{\partial x_{\alpha}\partial x_{\beta}} + \frac{\partial^2 w_0}{\partial \xi_{\alpha}\partial \xi_{\beta}} \frac{\partial^2 w_1}{\partial x_{\alpha}\partial x_{\beta}}
            \Bigr)
            - \frac{i}{6}\sum_{\alpha,\beta,\gamma} \frac{\partial^3 w_1}{\partial\xi_{\alpha}\partial\xi_{\beta}\partial\xi_{\gamma}}\frac{\partial^3 w_1}{\partial x_{\alpha}\partial x_{\beta}\partial x_{\gamma}} \notag\\
            & \quad + bw_{-1} + \frac{\partial w_{-1}}{\partial x_n}
        \biggr]. \notag
    \end{align}
    Proceeding recursively, for the terms of degree $-m\ (m \geqslant 2)$, we have
    \begin{align*}
        2w_1w_{-1-m} + \sum_
        {\mbox
            {\tiny
                $\begin{array}{c}
                    -m \leqslant j,k \leqslant 1 \\
                    |J| = j + k + m
                \end{array}$
            }
        }
        \frac{(-i)^{|J|}}{J!} \partial_{\xi^{\prime}}^{J}\!w_j \, \partial_{x^\prime}^{J}\!w_k - bw_{-m} - \frac{\partial w_{-m}}{\partial x_n} = 0,
    \end{align*}
    namely
    \begin{align}\label{2.9.1}
        w_{-1-m} = -\frac{1}{2}w_1^{-1}
        \Biggl(
            \sum_
            {\mbox
                {\tiny
                    $\begin{array}{c}
                        -m \leqslant j,k \leqslant 1 \\
                        |J| = j + k + m
                    \end{array}$
                }
            }
            \frac{(-i)^{|J|}}{J !} \partial_{\xi^{\prime}}^{J}\!w_j \, \partial_{x^\prime}^{J}\!w_k - bw_{-m} - \frac{\partial w_{-m}}{\partial x_n}
        \Biggr), \quad m \geqslant 2.
    \end{align}
\end{proof}

\addvspace{2mm}

From Proposition \ref{prop2.1}, we get the full symbol $w(x,\xi) \sim \sum_{j\leqslant 1} w_j(x,\xi)$ of the pseudodifferential operator $W$. This implies that $W$ has been obtained on $\partial \Omega$ modulo a smoothing operator.
\begin{proposition}
    In the boundary normal coordinates, the magnetic Dirichlet-to-Neumann map $\mathcal{M}$ can be represented as
    \begin{align}\label{2.10}
        \mathcal{M}(f) = (W - iA_n)u|_{\partial \Omega}
    \end{align}
    modulo a smoothing operator.
\end{proposition}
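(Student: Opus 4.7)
The plan is to exploit the factorization of Proposition \ref{prop2.1} to build a microlocal parametrix for the Dirichlet problem adapted to the "outgoing" factor $\partial_n + W$, and then read off the Neumann data at $x_n = 0$. The essential point is that the principal symbol of $W$ is $|\xi'|$, which is real and strictly positive on $T^{*}(\partial \Omega) \setminus 0$, so $\partial_n + W$ is a microlocally well-posed forward evolution as $x_n$ increases from $0$ (its symbolic solutions behave like $e^{-|\xi'|x_n}$, decaying into the interior), whereas the companion factor $\partial_n + B - W$ has principal part $\partial_n - |\xi'|$ and produces exponentially growing modes that cannot correspond to the Dirichlet extension of a smooth $f$.

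Concretely, I would first solve the first-order pseudodifferential Cauchy problem
\[
    (\partial_n + W)\tilde{u} = 0, \qquad \tilde{u}|_{x_n = 0} = f,
\]
by the standard construction of a family $\mathcal{U}(x_n)$ of pseudodifferential operators on $\partial \Omega$ depending smoothly on $x_n \in [0,\varepsilon)$, and set $\tilde{u}(x_n, x') := \mathcal{U}(x_n)f$. Applying the factorization from Proposition \ref{prop2.1} to $\tilde u$ yields $\mathcal{L}_g \tilde{u} \equiv 0$ modulo a smoothing operator applied to $f$. Under the standing assumption that $0$ is not a Dirichlet eigenvalue of $\mathcal{L}_g$, the true Dirichlet solution $u$ with $u|_{\partial \Omega} = f$ therefore differs from $\tilde{u}$ by a function whose boundary derivatives define smoothing operators in $f$; in particular $\partial_n u|_{\partial \Omega} \equiv \partial_n \tilde{u}|_{\partial \Omega}$ modulo a smoothing operator on $\partial \Omega$.

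Evaluating the Cauchy problem at $x_n = 0$ gives $\partial_n \tilde{u}|_{\partial \Omega} = -Wf$, and hence $\partial_n u|_{\partial \Omega} = -Wf$ modulo smoothing. Substituting into the boundary-normal-coordinates expression $\mathcal{M}(f) = (-\partial_n u - iA_n u)|_{\partial \Omega}$ derived from \eqref{1.5} then yields
\[
    \mathcal{M}(f) = Wf - iA_n f = (W - iA_n)u|_{\partial \Omega}
\]
modulo a smoothing operator, which is exactly \eqref{2.10}.

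The main obstacle is the careful justification of this "outgoing selection": showing that the Dirichlet extension is governed by the $\partial_n + W$ factor rather than its companion, and that the error made in replacing $u$ by $\tilde{u}$ is a genuine smoothing operator on $\partial \Omega$ rather than merely one of some finite negative order. This is the standard Dirichlet-to-Neumann parametrix argument (cf.\ \cite{LeeUhlm89}, \cite{Taylor96.2}, \cite{Liu15}), but here one must track through the construction the lower-order contributions introduced by the magnetic field, namely the $2iA_n$ piece in $B$ and the $-2\sum_\alpha A_\alpha \xi_\alpha$ piece in $c_1$; fortunately these affect only subprincipal symbols and do not disturb the ellipticity and sign structure on which the construction relies.
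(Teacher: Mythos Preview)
Your proposal is correct and follows exactly the approach the paper intends: the paper's own proof consists of the single sentence ``This proof is similar to the proof of Proposition 1.2 in \cite{LeeUhlm89},'' and what you have written is precisely a sketch of that Lee--Uhlmann parametrix argument, with the minor bookkeeping for the magnetic terms made explicit.
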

\begin{proof}
    This proof is similar to the proof of Proposition 1.2 in \cite{LeeUhlm89}.
\end{proof}

It follows from \eqref{2.10} that the full symbol $\sigma(\mathcal{M})$ of the magnetic Dirichlet-to-Neumann map $\mathcal{M}$ has the form
\begin{equation}\label{2.11}
    \sigma(\mathcal{M}) \sim w_1 + (w_0 - iA_n) + \sum_{j \leqslant -1} w_j.
\end{equation}
We apply the methods of Grubb \cite{Grubb86} and Seely \cite{Seeley67}. Let $S(\tau)$ be a two-sided parametrix for $\mathcal{M} - \tau$, that is, $S(\tau)$ is a pseudodifferential operator of order $-1$ with parameter $\tau$ for which
\begin{align*}
    & S(\tau)(\mathcal{M} - \tau) = 1 \mod OPS^{-\infty}, \\
    & (\mathcal{M} - \tau)S(\tau) = 1 \mod OPS^{-\infty}.
\end{align*}
Let $s(x,\xi^{\prime},\tau)$ be the full symbol of $S(\tau)$, we write
\begin{align*}
    s(x,\xi^{\prime},\tau) \sim \sum_{j \leqslant -1} s_j(x,\xi^{\prime},\tau).
\end{align*}
Therefore, we obtain
\begin{align}\label{2.14}
    s_{-1} = (w_1-\tau)^{-1},
\end{align}
and
\begin{align}\label{2.14.1}
    s_{-1-m} = -s_{-1} \sum_
    {\mbox
        {\tiny
            $\begin{array}{c}
                -m \leqslant j \leqslant 1 \\
                -m \leqslant k \leqslant -1 \\
                |J| = j + k + m
            \end{array}$
        }
    }
    \frac{(-i)^{|J|}}{J !} \partial_{\xi^{\prime}}^{J}\! \sigma_j(\mathcal{M}) \, \partial_{x^\prime}^{J}\! s_k, \quad m \geqslant 1.
\end{align}
For later reference, we write out the expressions for $s_{-2}$, $s_{-3}$ and $s_{-4}$ as follows:
\begin{align}
    s_{-2} & = -s_{-1}
    \biggl[
        (w_0 - iA_n)s_{-1} - i\sum_\alpha \frac{\partial w_1}{\partial \xi_\alpha} \frac{\partial s_{-1}}{\partial x_\alpha}
    \biggr], \label{2.16}\\[1mm]
    s_{-3} & = -s_{-1}
    \bigg[
        (w_0 - iA_n)s_{-2} + w_{-1}s_{-1} - i \sum_\alpha
        \Bigl(
            \frac{\partial w_1}{\partial \xi_\alpha} \frac{\partial s_{-2}}{\partial x_\alpha} + \frac{\partial w_0}{\partial \xi_\alpha} \frac{\partial s_{-1}}{\partial x_\alpha}
        \Bigr) \label{2.17}\\
        & \quad - \frac{1}{2} \sum_{\alpha,\beta} \frac{\partial^2 w_1}{\partial \xi_{\alpha}\partial \xi_{\beta}} \frac{\partial^2 s_{-1}}{\partial x_{\alpha}\partial x_{\beta}}
    \bigg], \notag
\end{align}
and
\begin{align}\label{2.18}
    s_{-4} & = -s_{-1}
    \bigg[
        (w_0 - iA_n)s_{-3} + w_{-1}s_{-2} + w_{-2}s_{-1} - i\sum_\alpha
        \Bigl(
            \frac{\partial w_1}{\partial \xi_\alpha} \frac{\partial s_{-3}}{\partial x_\alpha} + \frac{\partial w_0}{\partial \xi_\alpha} \frac{\partial s_{-2}}{\partial x_\alpha} \\
            & \quad + \frac{\partial w_{-1}}{\partial \xi_\alpha} \frac{\partial s_{-1}}{\partial x_\alpha}
        \Bigr)
        - \frac{1}{2} \sum_{\alpha,\beta}
        \Bigl(
            \frac{\partial^2 w_1}{\partial \xi_{\alpha}\partial \xi_{\beta}} \frac{\partial^2 s_{-2}}{\partial x_{\alpha}\partial x_{\beta}} + \frac{\partial^2 w_0}{\partial \xi_{\alpha}\partial \xi_{\beta}} \frac{\partial^2 s_{-1}}{\partial x_{\alpha}\partial x_{\beta}}
        \Bigr) \notag\\
        & \quad + \frac{i}{6} \sum_{\alpha,\beta,\gamma} \frac{\partial^3 w_1}{\partial\xi_{\alpha}\partial\xi_{\beta}\partial\xi_{\gamma}} \frac{\partial^3 s_{-1}}{\partial x_{\alpha}\partial x_{\beta}\partial x_{\gamma}}
    \bigg]. \notag
\end{align}

\addvspace{10mm}

\section{Calculations of the Coefficients of the Heat Trace Asymptotic Expansion}

In this section, inspired by the methods in \cite{Grubb86}, \cite{Liu15}, \cite{Liu19}, \cite{Seeley67} and \cite{Seeley69}, we will establish an effective procedure to calculate the first $n-1$ coefficients of the heat trace asymptotic expansion associated with the magnetic Dirichlet-to-Neumann map.

According to the theory of elliptic equations (see \cite{Morrey66}, \cite{Morrey58}, \cite{Stewart74}), we see that the magnetic Dirichlet-to-Neumann map $\mathcal{M}$ associated with the magnetic Schr\"{o}dinger operator can generate a strongly continuous generalized heat semigroup $e^{-t\mathcal{M}}$ in a suitable space defined on the boundary $\partial \Omega$. Furthermore, there exists a parabolic kernel $\mathcal{K}(t,x^{\prime},y^{\prime})$ such that (see \cite{GimpGrubb14})
\begin{align*}
    e^{-t\mathcal{M}} \phi(x^{\prime})
    = \int_{\partial \Omega} \mathcal{K}(t,x^{\prime},y^{\prime}) \,dS
\end{align*}
for $\phi \in H^{1}(\partial \Omega)$. Let $\{u_k\}_{k \geqslant 1}$ be the orthonormal eigenfunctions of the magnetic Dirichlet-to-Neumann map $\mathcal{M}$ corresponding to the eigenvalues $\{\lambda_k\}_{k \geqslant 1}$, then the kernel $\mathcal{K}(t,x^{\prime},y^{\prime})$ is given by
\begin{align*}
    \mathcal{K}(t,x^{\prime},y^{\prime})
    & = e^{-t\mathcal{M}} \delta(x^{\prime} - y^{\prime}) \\
    & = \sum_{k=1}^{\infty} e^{-t\lambda_k} u_k(x^{\prime}) \otimes u_k(y^{\prime}).
\end{align*}
This implies that the integral of the trace of the kernel $\mathcal{K}(t,x^{\prime},y^{\prime})$, i.e.,
\begin{align*}
    \int_{\partial \Omega} \mathcal{K}(t,x^{\prime},x^{\prime}) \,dS
    = \sum_{k=1}^{\infty} e^{-t\lambda_k}
\end{align*}
is actually a spectral invariants. On the other hand, the semigroup $e^{-t\mathcal{M}}$ can also be represented as
\begin{align*}
    e^{-t\mathcal{M}}
    = \frac{i}{2\pi}\int_{\mathcal{C}} e^{-t\tau}(\mathcal{M}-\tau)^{-1} \,d\tau
\end{align*}
where $\mathcal{C}$ is a suitable curve in the complex plane in the positive direction around the spectrum of $\mathcal{M}$, that is, $\mathcal{C}$ is a contour around the positive real axis. It follows that
\begin{align*}
    \mathcal{K}(t,x^{\prime},y^{\prime})
    & = e^{-t\mathcal{M}} \delta(x^{\prime} - y^{\prime}) \\
    & = \frac{1}{(2\pi)^{n-1}} \int_{T^{*}(\partial \Omega)} e^{i \langle x^{\prime} - y^{\prime},\xi^{\prime} \rangle} 
    \bigg[ 
        \frac{i}{2\pi}\int_{\mathcal{C}} e^{-t\tau}(\mathcal{M}-\tau)^{-1} \,d\tau
    \bigg] \,d\xi^{\prime} \\
    & = \frac{1}{(2\pi)^{n-1}} \int_{\mathbb{R}^{n-1}} e^{i \langle x^{\prime} - y^{\prime},\xi^{\prime} \rangle}
    \bigg[
        \frac{i}{2\pi}\int_{\mathcal{C}} e^{-t\tau} 
        \bigg( \sum_{j \leqslant -1} s_{j}(x,\xi^{\prime},\tau) \bigg) \,d\tau 
    \bigg] \,d\xi^{\prime},
\end{align*}
such that the trace of the kernel $\mathcal{K}(t,x^{\prime},y^{\prime})$ is
\begin{align*}
    \mathcal{K}(t,x^{\prime},x^{\prime})
    = \frac{1}{(2\pi)^{n-1}} \int_{\mathbb{R}^{n-1}}
    \bigg[
        \frac{i}{2\pi} \int_{\mathcal{C}} e^{-t\tau} 
        \bigg( \sum_{j \leqslant -1} s_{j}(x,\xi^{\prime},\tau) \bigg) \,d\tau 
    \bigg] \,d\xi^{\prime}.
\end{align*}
Therefore,
\begin{align*}
    \sum_{k=1}^{\infty} e^{-t\lambda_k}
    = \int_{\partial \Omega} 
    \bigg\{ 
        \frac{1}{(2\pi)^{n-1}} \int_{\mathbb{R}^{n-1}}
        \bigg[
            \frac{i}{2\pi}\int_{\mathcal{C}} e^{-t\tau} 
            \bigg( \sum_{j \leqslant -1} s_{j}(x,\xi^{\prime},\tau) \bigg) \,d\tau 
        \bigg] \,d\xi^{\prime}
    \bigg\} \,dS.
\end{align*}
We will calculate the asymptotic expansion of the trace of the semigroup $e^{-t\mathcal{M}}$ as $t \to 0^{+}$. More precisely, we will figure out that
\begin{equation}\label{3.1}
    a_{k}(x) 
    = \frac{i}{(2 \pi)^{n}}\int_{\mathbb{R}^{n-1}}\int_{\mathcal{C}} e^{-\tau}s_{-1-k}(x,\xi^{\prime},\tau) \,d\tau \,d\xi^{\prime}
\end{equation}
for $0 \leqslant k \leqslant n-1$.

\addvspace{2mm}

It is convenient to calculate the coefficients of the asymptotic expansion by the following lemma:
\begin{lemma}\label{lem3.1}
    (\romannumeral 1) For any $k \geqslant 1$, we have
    \begin{equation*}
        \frac{1}{2\pi i} \int_{\mathcal{C}} s_{-1}^k e^{-\tau} \,d\tau 
        = -\frac{1}{(k-1)!}e^{-w_1}.
    \end{equation*}

    (\romannumeral 2) For any real $k$, we have the following integrals:
    \begin{align*}
        \int_{\mathbb{R}^{n-1}} w_1^{k} e^{-w_1} \xi_{\alpha}^m \,d\xi^{\prime} =
        \left\{
            \begin{array}{ll}
                \Gamma(n+k-1) \operatorname{vol}(\mathbb{S}^{n-2}), \quad & m = 0, \\
                0, \quad & m = 1,
            \end{array}
        \right.
    \end{align*}
    \begin{align*}
        \int_{\mathbb{R}^{n-1}} w_1^{k-2} e^{-w_1} \xi_{\alpha} \xi_{\beta} \,d\xi^{\prime} =
        \left\{
            \begin{array}{ll}
                \displaystyle \frac{1}{n-1} \Gamma(n+k-1) \operatorname{vol}(\mathbb{S}^{n-2}), \quad & \alpha = \beta, \\[3mm]
                0, \quad & \alpha \neq \beta,
            \end{array}
        \right.
    \end{align*}
    and
    \begin{align*}
        \int_{\mathbb{R}^{n-1}} w_1^{k-4} e^{-w_1} \xi_{\alpha}^2 \xi_{\beta}^2 \,d\xi^{\prime} =
        \left\{
            \begin{array}{ll}
                \displaystyle \frac{3}{n^{2}-1} \Gamma(n+k-1) \operatorname{vol}(\mathbb{S}^{n-2}), \quad & \alpha = \beta, \\[3mm]
                \displaystyle \frac{1}{n^{2}-1} \Gamma(n+k-1) \operatorname{vol}(\mathbb{S}^{n-2}), \quad & \alpha \neq \beta.
            \end{array}
        \right.
    \end{align*}
\end{lemma}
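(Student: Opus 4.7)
The plan is to handle the two parts separately: part (\romannumeral 1) is a residue calculus exercise using $s_{-1} = (w_1-\tau)^{-1}$, while part (\romannumeral 2) reduces, after passing to polar coordinates on $\mathbb{R}^{n-1}$, to a Gamma function radial integral times moments on the sphere $\mathbb{S}^{n-2}$.

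For part (\romannumeral 1), I would rewrite $s_{-1}^{k}=(w_1-\tau)^{-k}=(-1)^{k}(\tau-w_1)^{-k}$. Since $w_1=|\xi'|>0$ lies on the positive real axis and $\mathcal{C}$ is positively oriented around it, the integrand has a single pole of order $k$ at $\tau=w_1$. Cauchy's integral formula gives
\begin{equation*}
    \frac{1}{2\pi i}\int_{\mathcal{C}}\frac{e^{-\tau}}{(\tau-w_1)^{k}}\,d\tau
    =\frac{1}{(k-1)!}\frac{d^{k-1}}{d\tau^{k-1}}e^{-\tau}\Big|_{\tau=w_1}
    =\frac{(-1)^{k-1}}{(k-1)!}e^{-w_1},
\end{equation*}
and multiplying by the sign $(-1)^{k}$ yields $-\tfrac{1}{(k-1)!}e^{-w_1}$, as claimed.

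For part (\romannumeral 2), I would introduce polar coordinates $\xi'=r\omega$ with $r=w_1=|\xi'|\geqslant 0$ and $\omega\in\mathbb{S}^{n-2}$, so that $d\xi'=r^{n-2}\,dr\,d\omega$. Each integrand separates as a power of $r$ times $e^{-r}$ times a monomial in $\omega_\alpha$; in all three cases the radial integral collapses to $\int_{0}^{\infty}r^{n+k-2}e^{-r}\,dr=\Gamma(n+k-1)$. The first identity is then immediate: $\int_{\mathbb{S}^{n-2}}d\omega=\operatorname{vol}(\mathbb{S}^{n-2})$ and $\int_{\mathbb{S}^{n-2}}\omega_\alpha\,d\omega=0$ by odd symmetry. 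For the second identity, $\int_{\mathbb{S}^{n-2}}\omega_\alpha\omega_\beta\,d\omega$ vanishes for $\alpha\neq\beta$ by symmetry, and summing the diagonal against the constraint $\sum_\alpha\omega_\alpha^{2}=1$ gives $\int_{\mathbb{S}^{n-2}}\omega_\alpha^{2}\,d\omega=\operatorname{vol}(\mathbb{S}^{n-2})/(n-1)$.

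The main technical step is the fourth-moment computation needed for the third identity. Set $A:=\int_{\mathbb{S}^{n-2}}\omega_\alpha^{4}\,d\omega$ and $B:=\int_{\mathbb{S}^{n-2}}\omega_\alpha^{2}\omega_\beta^{2}\,d\omega$ for $\alpha\neq\beta$, both independent of the chosen indices by isotropy; off-diagonal mixed terms such as $\int\omega_\alpha^{2}\omega_\beta\omega_\gamma\,d\omega$ vanish by odd symmetry. Integrating the identity $(\sum_\alpha\omega_\alpha^{2})^{2}=1$ over $\mathbb{S}^{n-2}$ gives the first relation $(n-1)A+(n-1)(n-2)B=\operatorname{vol}(\mathbb{S}^{n-2})$. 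A second relation comes from evaluating a rotationally invariant Gaussian integral in two ways: passing to polar coordinates and using $\int_{\mathbb{R}}x^{4}e^{-x^{2}/2}\,dx=3\int_{\mathbb{R}}x^{2}e^{-x^{2}/2}\,dx$ against the product structure of the Gaussian forces $A=3B$. Solving the resulting $2\times 2$ linear system yields $B=\operatorname{vol}(\mathbb{S}^{n-2})/(n^{2}-1)$ and $A=3\operatorname{vol}(\mathbb{S}^{n-2})/(n^{2}-1)$; multiplying by $\Gamma(n+k-1)$ delivers the two sub-cases of the third identity. The only real subtlety is producing the factor $A/B=3$, which is where the Gaussian comparison (equivalently, the beta-function moments on the sphere) does the work.
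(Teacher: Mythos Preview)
Your proposal is correct and follows exactly the approach indicated by the paper, which simply says that part (\romannumeral 1) is ``a simple computation with calculus of residues'' and part (\romannumeral 2) ``can be obtained by applying the spherical coordinates transform.'' You have faithfully supplied the missing details---the Cauchy formula computation for the order-$k$ pole at $\tau=w_1$, and the factorization into a Gamma-function radial integral times spherical moments, including the clean derivation of $A=3B$ via the Gaussian comparison.
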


\addvspace{5mm}

\begin{proof}
    The proof of the first result is a simple computation with calculus of residues. The second result can be obtained by applying the spherical coordinates transform (or see \cite{Liu15}).
\end{proof}

\addvspace{5mm}

\begin{proof}[Proof of Theorem \ref{thm1.1}]
    We divide this proof into several steps.
    
    \addvspace{2mm}

    Step 1. It follows from \eqref{2.6} and \eqref{2.14} that
    \begin{equation*}
        s_{-1} = (w_1 - \tau)^{-1} = (|\xi^{\prime}| - \tau)^{-1}.
    \end{equation*}
    Therefore, using formula \eqref{3.1} and Lemma \ref{lem3.1}, we immediately get
    \begin{align*}
        a_{0}(x)
        & = \frac{i}{(2 \pi)^{n}}\int_{\mathbb{R}^{n-1}}\int_{\mathcal{C}} e^{-\tau}s_{-1} \,d\tau \,d\xi^{\prime} \\
        & = \frac{\Gamma(n-1)\operatorname{vol}(\mathbb{S}^{n-2})}{(2\pi)^{n-1}}, \quad n \geqslant 1.
    \end{align*}
    
    \addvspace{2mm}

Step 2. Note that $\frac{\partial s_{-1}}{\partial x_\alpha}(x_{0}) = \frac{\partial w_1}{\partial x_\alpha}(x_{0}) = 0$ in the boundary normal coordinates. Thus,
\begin{align*}
    s_{-2} = -s_{-1}^2 (w_0 - iA_n).
\end{align*}
According to \eqref{2.7}, we see that
\begin{align}\label{3.12}
    w_0
    & = \frac{i}{4}w_1^{-3} \sum_{\alpha,\beta,\gamma} \frac{\partial g^{\beta\gamma}}{\partial x_\alpha} \xi_\alpha\xi_\beta\xi_\gamma + \frac{1}{4} \sum_{\alpha,\beta} g^{\alpha\beta}\frac{\partial g_{\alpha\beta}}{\partial x_n} + iA_n + \frac{1}{4}w_1^{-2} \sum_{\alpha,\beta} \frac{\partial g^{\alpha\beta}}{\partial x_n}\xi_\alpha\xi_\beta \\
    & \quad - \frac{i}{2}w_1^{-1} \sum_{\alpha,\beta}
    \biggl(
        \frac{1}{2}g^{\alpha\beta} \sum_{\gamma,\rho} g^{\gamma\rho}\frac{\partial g_{\gamma\rho}}{\partial x_\alpha} + \frac{\partial g^{\beta\gamma}}{\partial x_\alpha}
    \biggr)
    \xi_\beta + w_1^{-1} \sum_\alpha A_\alpha\xi_\alpha. \notag
\end{align}
In the boundary coordinates, we find that
\begin{align}\label{3.12.1}
    & b(x_{0}) = -H + 2iA_n,
\end{align}
and
\begin{align*}
    c_1(x_{0}) = -2\sum_\alpha A_\alpha\xi_\alpha.
\end{align*}
Hence, it follows from \eqref{2.11} that
\begin{align}\label{3.13}
    (w_0 - iA_n)(x_{0}) 
    = \frac{1}{2}w_1^{-2} \sum_\alpha \kappa_\alpha\xi_{\alpha}^2 - \frac{1}{2}H + w_1^{-1}\sum_\alpha A_\alpha\xi_\alpha.
\end{align}
We then obtain
\begin{align*}
    s_{-2} \sim -\frac{1}{2}s_{-1}^2
    \biggl(
        w_1^{-2} \sum_\alpha \kappa_\alpha\xi_{\alpha}^2 - H
    \biggr).
\end{align*}
Therefore, by applying formula \eqref{3.1} and Lemma \ref{lem3.1}, we get
\begin{align*}
    a_{1}(x)
    & = \frac{i}{(2 \pi)^{n}}\int_{\mathbb{R}^{n-1}}\int_{\mathcal{C}} e^{-\tau}s_{-2} \,d\tau \,d\xi^{\prime} \\
    & = \frac{(n-2)\Gamma(n-1)\operatorname{vol}(\mathbb{S}^{n-2})}{2(2\pi)^{n-1}(n-1)}H, \quad n \geqslant 2.
\end{align*}

\addvspace{2mm}

Step 3. From \eqref{2.14} and \eqref{2.16}, we compute
\begin{align*}
    \frac{\partial s_{-1}}{\partial x_\alpha} 
    = -s_{-1}^2\frac{\partial w_1}{\partial x_\alpha},
\end{align*}
and
\begin{align}\label{3.17}
    \frac{\partial s_{-2}}{\partial x_\alpha}
    & = 2s_{-1}^3(w_0 - iA_n)\frac{\partial w_1}{\partial x_\alpha} - s_{-1}^2\frac{\partial (w_0 - iA_n)}{\partial x_\alpha} + 3is_{-1}^4\frac{\partial w_1}{\partial x_\alpha} \sum_\beta \frac{\partial w_1}{\partial \xi_\beta}\frac{\partial w_1}{\partial x_\beta} \\
    & \quad - is_{-1}^3 \sum_\beta \Bigl(\frac{\partial^2 w_1}{\partial\xi_\beta\partial x_\alpha} \frac{\partial w_1}{\partial x_\beta} + \frac{\partial w_1}{\partial \xi_\beta}\frac{\partial^2 w_1}{\partial x_{\alpha}\partial x_{\beta}}\Bigr). \notag
\end{align}
According to \eqref{2.17}, we conclude that
\begin{align}\label{3.18}
    s_{-3}
    & = s_{-1}^3(w_0 - iA_n)^2 + 3is_{-1}^4(w_0 - iA_n) \sum_\alpha \frac{\partial w_1}{\partial \xi_\alpha} \frac{\partial w_1}{\partial x_\alpha} - s_{-1}^2w_{-1} - is_{-1}^3 \sum_\alpha \frac{\partial w_1}{\partial \xi_\alpha} \frac{\partial (w_0 - iA_n)}{\partial x_\alpha} \\
    & \quad - 3s_{-1}^5\biggl(\sum_\alpha \frac{\partial w_1}{\partial \xi_\alpha}\frac{\partial w_1}{\partial x_\alpha}\biggr)^2 + s_{-1}^4 \sum_{\alpha,\beta} \frac{\partial w_1}{\partial \xi_\alpha} \frac{\partial^2 w_1}{\partial\xi_\beta\partial x_\alpha} \frac{\partial w_1}{\partial x_\beta} + s_{-1}^4 \sum_{\alpha,\beta} \frac{\partial w_1}{\partial \xi_\alpha} \frac{\partial w_1}{\partial \xi_\beta} \frac{\partial^2 w_1}{\partial x_{\alpha}\partial x_{\beta}} \notag\\
    & \quad - is_{-1}^3 \sum_\alpha \frac{\partial w_0}{\partial \xi_\alpha} \frac{\partial w_1}{\partial x_\alpha} + s_{-1}^4 \sum_{\alpha,\beta} \frac{\partial^2 w_1}{\partial \xi_{\alpha}\partial \xi_{\beta}} \frac{\partial w_1}{\partial x_\alpha} \frac{\partial w_1}{\partial x_\beta} - \frac{1}{2}s_{-1}^3 \sum_{\alpha,\beta} \frac{\partial^2 w_1}{\partial \xi_{\alpha}\partial \xi_{\beta}} \frac{\partial^2 w_1}{\partial x_{\alpha}\partial x_{\beta}}. \notag
\end{align}

In order to simplify the calculations and to investigate the roles of the magnetic potential $A$ and the electric potential $q$, we consider the terms which only involve $A$ or $q$ in an expression $f$ at the point $x_{0}$, and denote by $f(A,q)$ the sum of these terms. Thus, it follows from formula \eqref{3.1} that
\begin{equation}\label{3.9}
    a_{k}(A,q) 
    = \frac{i}{(2 \pi)^{n}}\int_{\mathbb{R}^{n-1}}\int_{\mathcal{C}} e^{-\tau}s_{-1-k}(A,q) \,d\tau \,d\xi^{\prime}
\end{equation}
for $0 \leqslant k \leqslant n-1$.
Therefore, we get
\begin{align}\label{3.10}
    a_{k}(x) = \tilde{a}_{k}(x) + a_{k}(A,q),
\end{align}
where $\tilde{a}_{k}(x)$ are the spectral invariants of the classical Dirichlet-to-Neumann map associated with the Laplace-Beltrami operator $\Delta_{g}$. For later reference, we list the expressions for $\tilde{a}_2(x)$ and $\tilde{a}_3(x)$ as follows (see \cite{Liu15}):
\begin{align}\label{3.30}
    \tilde{a}_2(x) 
    & = \frac{\Gamma(n-2)\operatorname{vol}(\mathbb{S}^{n-2})}{8(2\pi)^{n-1}}
    \biggl[
        \frac{4-n}{3(n-1)}R + \frac{n-2}{n-1}\tilde{R} + \frac{n^3-4n^2+n+8}{n^2-1}H^2 \\[1mm]
        & \quad + \frac{n(n-3)}{n^2-1} \sum_{\alpha} \kappa_{\alpha}^2
    \biggr], \quad n \geqslant 3; \notag
\end{align}
and
\begin{align}\label{3.31}
    \tilde{a}_3(x)
    & = \frac{\Gamma(n-3)\operatorname{vol}(\mathbb{S}^{n-2})}{8(n-1)(2\pi)^{n-1}}
    \biggl[
        \frac{n^3-6n^2+2n+14}{2(n+1)}H\tilde{R} - \frac{3n^3-20n^2+12n+42}{6(n+1)}HR \\[1mm]
        & \quad + \frac{n^5-5n^4-10n^3+52n^2+2n-114}{6(n+1)(n+3)}H^3 + \frac{n^4-n^3-12n^2+22n+6}{2(n+1)(n+3)}H\sum_{\alpha} \kappa_{\alpha}^2 \notag\\[1mm]
        & \quad - \frac{4(n-3)(n-2)}{3(n+1)(n+3)}\sum_{\alpha} \kappa_{\alpha}^3 + \frac{2(n^2-3n+1)}{n+1}\sum_{\alpha} \kappa_{\alpha}\tilde{R}_{\alpha\alpha} - \frac{2n(3n-8)}{3(n+1)}\sum_{\alpha} \kappa_{\alpha}R_{\alpha\alpha} \notag\\[1mm]
        & \quad + (n-2) \nabla_{\frac{\partial }{\partial x_n}}\tilde{R}_{nn}
    \biggr], \quad n \geqslant 4. \notag
\end{align}
Noting that the dimension of the domain $\Omega$ is $n+1$ in \cite{Liu15}, here we have changed it to $\operatorname{dim} \Omega = n$ in the expressions above.

\addvspace{2mm}

It follows from \eqref{3.18} that the terms which only involve $A$ or $q$ in $s_{-3}$ are
\begin{equation}\label{3.19}
    s_{-3}(A,q) 
    = s_{-1}^3(w_0 - iA_n)^2 - s_{-1}^2w_{-1} - is_{-1}^3 \sum_\alpha \frac{\partial w_1}{\partial \xi_\alpha} \frac{\partial (w_0 - iA_n)}{\partial x_\alpha}.
\end{equation}
It follows from \eqref{2.8} that
\begin{align*}
    w_{-1}(A,q)
    & = \frac{1}{2}w_1^{-1}
    \bigg[
        -(w_0 - iA_n)^2 + i\sum_\alpha \frac{\partial w_1}{\partial \xi_\alpha}\frac{\partial (w_0 - iA_n)}{\partial x_\alpha} - H(w_0 - iA_n) + \frac{\partial (w_0 - iA_n)}{\partial x_n} \\
        & \quad - \sum_\alpha \frac{\partial w_1}{\partial \xi_\alpha}\frac{\partial A_{n}}{\partial x_{\alpha}} + E
    \bigg],
\end{align*}
where
\begin{equation}\label{3.25}
    E = A_{n}^2 + iA_nb + i\frac{\partial A_{n}}{\partial x_n} - c_0.
\end{equation}
Recall that (see \eqref{1.3})
\begin{align}\label{3.26}
    V & = \sum_{j,k} g_{jk}A_jA_k - i
    \biggl(
        \sum_j \frac{\partial A_j}{\partial x_j} + \frac{1}{2}\sum_{j,k,l} g^{jk} \frac{\partial g_{jk}}{\partial x_l} A_l
    \biggr) + q - k^2.
\end{align}
Thus, in the boundary normal coordinates, at the point $x_{0}$, we get
\begin{align*}
    V(x_{0}) 
    = \sum_j \Bigl(A_j^2 - i\frac{\partial A_j}{\partial x_j}\Bigr) + iA_nH + q - k^2.
\end{align*}
It follows from \eqref{3.12.1}, \eqref{3.25} and $c_0 = -V$ that
\begin{align}\label{3.28}
    E(x_{0}) 
    = \sum_\alpha \Bigl(A_\alpha^2 - i\frac{\partial A_\alpha}{\partial x_\alpha}\Bigr) + q - k^2.
\end{align}
From \eqref{3.13} we have
\begin{align*}
    (w_0 - iA_n)^2(A,q)
    = w_1^{-2}\biggl(\sum_\alpha A_\alpha\xi_\alpha\biggr)^2 +
    \biggl(w_1^{-2}\sum_\alpha \kappa_\alpha\xi_\alpha^2 - H\biggr)
    w_1^{-1}\sum_\alpha A_\alpha\xi_\alpha.
\end{align*}

For the two terms on the right-hand side in the equality above, we find that
\begin{align*}
    \int_{\mathbb{R}^{n-1}} e^{-w_1}
    \biggl(w_1^{-2}\sum_\alpha \kappa_\alpha\xi_\alpha^2 - H\biggr)
    w_1^{-1}\sum_\alpha A_\alpha\xi_\alpha \,d\xi^{\prime} = 0,
\end{align*}
and
\begin{align*}
    \int_{\mathbb{R}^{n-1}} e^{-w_1}w_1^{-2} \biggl(\sum_\alpha A_\alpha\xi_\alpha\biggr)^2 \,d\xi^{\prime}
    = \int_{\mathbb{R}^{n-1}} e^{-w_1}w_1^{-2}\sum_\alpha A_\alpha^2\xi_\alpha^2 \,d\xi^{\prime}.
\end{align*}
Actually, any term which is odd in $\xi_\alpha$ for any particular $\alpha$ will integrate to zero when integrating over $\mathbb{R}^{n-1}$. Thus, we only consider the terms which are even in $\xi_\alpha$ in an expression $f(\xi^{\prime})$ when integrating over $\mathbb{R}^{n-1}$. We henceforth denote by
\begin{equation*}
    f(\xi^{\prime}) \sim g(\xi^{\prime}),
\end{equation*}
where $g(\xi^{\prime})$ is the sum of the terms which are even in $\xi_\alpha$ in $f(\xi^{\prime})$. More precisely, $f(\xi^{\prime}) \sim g(\xi^{\prime})$ if and only if
\begin{equation*}
    \int_{\mathbb{R}^{n-1}} \big[f(\xi^{\prime}) - g(\xi^{\prime})\big] e^{-w_1} \,d\xi^{\prime} = 0.
\end{equation*}

Thus, in our notation, we have
\begin{equation}\label{3.28.1}
    (w_0 - iA_n)^2(A,q) \sim w_1^{-2}\sum_\alpha A_\alpha^2\xi_\alpha^2.
\end{equation}
Likewise,
\begin{align*}
    \frac{\partial (w_0 - iA_n)}{\partial x_n}(A,q) \sim 0, \qquad
    \sum_\alpha\frac{\partial w_1}{\partial \xi_\alpha}\frac{\partial A_{n}}{\partial x_{\alpha}} \sim 0.
\end{align*}
It is easy to compute that
\begin{align*}
    \frac{\partial (w_0 - iA_n)}{\partial x_\alpha}(A,q) 
    = w_1^{-1}\sum_\beta \frac{\partial A_\beta}{\partial x_\alpha}\xi_\beta,
\end{align*}
we then have
\begin{align*}
    w_{-1}(A,q)
    & \sim -\frac{1}{2}w_1^{-3}\sum_\alpha
    \Bigl(A_\alpha^2 - i\frac{\partial A_\alpha}{\partial x_\alpha}\Bigr)
    \xi_\alpha^2 + \frac{1}{2}w_1^{-1}
    \biggl[
        \sum_\alpha 
        \Bigl(A_\alpha^2 - i\frac{\partial A_\alpha}{\partial x_\alpha}\Bigr) 
        + q - k^2
    \biggr].
\end{align*}
Therefore, from \eqref{3.19} we get
\begin{align}\label{3.29}
    s_{-3}(A,q)
    & \sim \frac{1}{2}(2s_{-1}^3w_1^{-2} + s_{-1}^2w_1^{-3}) \sum_\alpha
    \Bigl(A_\alpha^2 - i\frac{\partial A_\alpha}{\partial x_\alpha}\Bigr)
    \xi_\alpha^2 \\
    & \quad - \frac{1}{2}s_{-1}^2w_1^{-1}\biggl[\sum_\alpha \Bigl(A_\alpha^2 - i\frac{\partial A_\alpha}{\partial x_\alpha}\Bigr) + q - k^2\biggr]. \notag
\end{align}
It follows from \eqref{3.9}, \eqref{3.29} and Lemma \ref{lem3.1} that
\begin{equation*}
    a_2(A,q) = -\frac{\Gamma(n-2)\operatorname{vol}(\mathbb{S}^{n-2})}{2(2\pi)^{n-1}}(q-k^2).
\end{equation*}
As a consequence, substituting this and \eqref{3.30} into \eqref{3.10}, we obtain
\begin{align*}
    a_2(x) 
    & = \frac{\Gamma(n-2)\operatorname{vol}(\mathbb{S}^{n-2})}{24(2\pi)^{n-1}(n^2-1)}
    \biggl[
        3(n^3-4n^2+n+8)H^2 + 3n(n-3) \sum_\alpha \kappa_\alpha^2 \\
        & \quad + 3(n+1)(n-2)\tilde{R} - (n+1)(n-4)R - 12(n^2-1)q + 12(n^2-1)k^2
    \biggr], \quad n \geqslant 3.
\end{align*}

\addvspace{2mm}

Step 4. According to \eqref{3.12} and \eqref{3.17}, we deduce that
\begin{align*}
    & \frac{\partial^2 s_{-2}}{\partial x_{\alpha}\partial x_{\beta}}(A,q)
    = -s_{-1}^2 \frac{\partial^2 m_0}{\partial x_{\alpha}\partial x_{\beta}}, \\
    & \frac{\partial^2 (w_0 - iA_n)}{\partial x_{\alpha}\partial x_{\beta}}(A,q)
    = -w_1^{-2} \frac{\partial^2 w_1}{\partial x_{\alpha}\partial x_{\beta}} \sum_\gamma A_\gamma \xi_\gamma + w_1^{-1} \sum_\gamma \frac{\partial^2 A_\gamma}{\partial x_{\alpha}\partial x_{\beta}} \xi_\gamma, \\
    & \frac{\partial^2 w_0}{\partial \xi_{\alpha}\partial \xi_{\beta}}(A,q)
    = 3w_1^{-5}\xi_{\alpha}\xi_{\beta} \sum_\gamma A_\gamma \xi_\gamma - w_1^{-3}
    \biggl(
        \delta_{\alpha\beta} \sum_\gamma A_\gamma \xi_\gamma + A_\alpha\xi_\beta + A_\beta\xi_\alpha
    \biggr),
\end{align*}
and
\begin{equation*}
    \frac{\partial^2 w_1}{\partial \xi_{\alpha}\partial \xi_{\beta}} 
    = -w_1^{-3}\xi_{\alpha}\xi_{\beta} + w_1^{-1}\delta_{\alpha\beta}.
\end{equation*}
Thus, we immediately get
\begin{align*}
    & \frac{\partial^2 (w_0 - iA_n)}{\partial x_{\alpha}\partial x_{\beta}}(A,q) \sim 0, &
    & \frac{\partial^2 w_0}{\partial \xi_{\alpha}\partial \xi_{\beta}}(A,q) \sim 0, \\
    & \biggl(
        \sum_{\alpha,\beta} \frac{\partial^2 w_1}{\partial \xi_{\alpha}\partial \xi_{\beta}}\frac{\partial^2 s_{-2}}{\partial x_{\alpha}\partial x_{\beta}}
    \biggr)(A,q) \sim 0, &
    & \biggl(
        \sum_{\alpha,\beta} \frac{\partial^2 w_0}{\partial \xi_{\alpha}\partial \xi_{\beta}}\frac{\partial^2 s_{-1}}{\partial x_{\alpha}\partial x_{\beta}}
    \biggr)(A,q) \sim 0.
\end{align*}
Consequently, it follows from \eqref{2.18} that
\begin{align*}
    s_{-4}(A,q) & \sim -s_{-1}
    \bigg[
        (w_0 - iA_n)s_{-3}+w_{-1}s_{-2}+w_{-2}s_{-1} - i\sum_\alpha
        \Bigl(
            \frac{\partial w_1}{\partial \xi_\alpha}\frac{\partial s_{-3}}{\partial x_\alpha} + \frac{\partial w_0}{\partial \xi_\alpha}\frac{\partial s_{-2}}{\partial x_\alpha}
        \Bigr)
    \bigg].
\end{align*}
By applying formula \eqref{2.16}, we find that
\begin{equation*}
    w_{-1}s_{-2}(x_{0}) = -s_{-1}^2(w_0 - iA_n)w_{-1}.
\end{equation*}
From \eqref{3.18}, we obtain
\begin{align*}
    \big[(w_0 - iA_n)s_{-3}\big](A,q)
    & \sim s_{-1}^3(w_0 - iA_n)^3 - s_{-1}^2(w_0 - iA_n)w_{-1} \\
    & \quad  - is_{-1}^3(w_0 - iA_n) \sum_\alpha \frac{\partial w_1}{\partial \xi_\alpha}\frac{\partial (w_0 - iA_n)}{\partial x_\alpha}.
\end{align*}
It follows from \eqref{2.9} that
\begin{align*}
    w_{-2}(A,q)
    & \sim \frac{1}{2}w_1^{-1}
    \biggl[
        -w_{-1} \big(H + 2(w_0 - iA_n)\big) + \frac{\partial w_{-1}}{\partial x_n} + i\sum_\alpha
        \Big(
            \frac{\partial w_1}{\partial \xi_\alpha}\frac{\partial w_{-1}}{\partial x_\alpha} + \frac{\partial w_0}{\partial \xi_\alpha}\frac{\partial (w_0 - iA_n)}{\partial x_\alpha} \\
            & \quad + i\frac{\partial w_0}{\partial \xi_\alpha}\frac{\partial A_{n}}{\partial x_{\alpha}}
        \Big)
        + \frac{i}{2} \sum_{\alpha,\beta} \frac{\partial^2 w_1}{\partial \xi_{\alpha}\partial \xi_{\beta}} \frac{\partial^2A_n}{\partial x_{\alpha}\partial x_{\beta}}
    \biggr]. \notag
\end{align*}
According to \eqref{3.18}, we have
\begin{align}\label{3.41}
    \frac{\partial s_{-3}}{\partial x_\alpha}(A,q)
    & = 2s_{-1}^3(w_0 - iA_n) \frac{\partial (w_0 - iA_n)}{\partial x_\alpha} + 3is_{-1}^4(w_0 - iA_n) \sum_\beta \frac{\partial w_1}{\partial \xi_\beta} \frac{\partial^2 w_1}{\partial x_{\alpha}\partial x_{\beta}} \\
    & \quad - s_{-1}^2 \frac{\partial w_{-1}}{\partial x_\alpha} - is_{-1}^3 \sum_\beta \frac{\partial w_1}{\partial \xi_\beta} \frac{\partial^2 (w_0 - iA_n)}{\partial x_{\alpha}\partial x_{\beta}} - is_{-1}^3 \sum_\beta \frac{\partial w_0}{\partial \xi_\beta} \frac{\partial^2 w_1}{\partial x_{\alpha}\partial x_{\beta}}. \notag
\end{align}
Note also that
\begin{align*}
    & \bigg(
        (w_0 - iA_n) \sum_{\alpha,\beta} \frac{\partial w_1}{\partial \xi_\alpha} \frac{\partial w_1}{\partial \xi_\beta} \frac{\partial^2 w_1}{\partial x_{\alpha}\partial x_{\beta}}
    \bigg)(A,q) \sim 0, \\
    & \bigg(
        \sum_{\alpha,\beta} \frac{\partial w_1}{\partial \xi_\alpha} \frac{\partial w_1}{\partial \xi_\beta} \frac{\partial^2 (w_0 - iA_n)}{\partial x_{\alpha}\partial x_{\beta}}
    \bigg)(A,q) \sim 0,
\end{align*}
and
\begin{align*}
    \bigg(
        \sum_{\alpha,\beta} \frac{\partial w_1}{\partial \xi_\alpha} \frac{\partial w_0}{\partial \xi_\beta} \frac{\partial^2 w_1}{\partial x_{\alpha}\partial x_{\beta}}
    \bigg)(A,q) \sim 0.
\end{align*}
It follows from \eqref{3.41} that
\begin{align*}
    \bigg(
        \sum_\alpha \frac{\partial w_1}{\partial \xi_\alpha} \frac{\partial s_{-3}}{\partial x_\alpha}
    \bigg)(A,q)
    & \sim 2s_{-1}^3(w_0 - iA_n) \sum_\alpha \frac{\partial w_1}{\partial \xi_\alpha} \frac{\partial (w_0 - iA_n)}{\partial x_\alpha} - s_{-1}^2 \sum_\alpha \frac{\partial w_1}{\partial \xi_\alpha} \frac{\partial w_{-1}}{\partial x_\alpha}.
\end{align*}
Using formula \eqref{3.17}, we get
\begin{align*}
    \bigg(
        \sum_\alpha \frac{\partial w_0}{\partial \xi_\alpha} \frac{\partial s_{-2}}{\partial x_\alpha}
    \bigg)(A,q)
    \sim -s_{-1}^2 \sum_\alpha \frac{\partial w_0}{\partial \xi_\beta} \frac{\partial (w_0 - iA_n)}{\partial x_\alpha}.
\end{align*}
We then conclude that
\begin{align*}
    s_{-4}(A,q) 
    & \sim -s_{-1}^4
    \biggl[
        (w_0 - iA_n)^3 - 3i(w_0 - iA_n) \sum_\alpha \frac{\partial w_1}{\partial \xi_\alpha}\frac{\partial (w_0 - iA_n)}{\partial x_\alpha}
    \biggr] \\
    & \quad - \frac{1}{2} s_{-1}^2w_1^{-1}
    \biggl(
            \frac{\partial w_{-1}}{\partial x_n} - Hw_{-1} - \sum_\alpha \frac{\partial w_0}{\partial \xi_\alpha} \frac{\partial A_{n}}{\partial x_{\alpha}} + \frac{i}{2} \sum_{\alpha,\beta} \frac{\partial^2 w_1}{\partial \xi_{\alpha}\partial \xi_{\beta}} \frac{\partial^2A_n}{\partial x_{\alpha}\partial x_{\beta}}
    \biggr) \\
    & \quad - \frac{1}{2} (2s_{-1}^3 + s_{-1}^2w_1^{-1})
    \biggl[
        i\sum_\alpha \frac{\partial w_1}{\partial \xi_\alpha} \frac{\partial w_{-1}}{\partial x_\alpha} + i\sum_\alpha \frac{\partial w_0}{\partial \xi_\alpha} \frac{\partial (w_0 - iA_n)}{\partial x_\alpha} - 2(w_0 - iA_n)w_{-1}
    \biggr].
\end{align*}
From \eqref{2.8}, it is straightforward to verify that
\begin{align*}
    \big[(w_0 - iA_n)w_{-1}\big](A,q)
    & \sim \frac{1}{2}w_1^{-1}
    \biggl[
        -(w_0 - iA_n)^3 + i(w_0 - iA_n) \sum_\alpha \frac{\partial w_1}{\partial \xi_\alpha} \frac{\partial (w_0 - iA_n)}{\partial x_\alpha} \\
        & \quad - H(w_0 - iA_n)^2 + (w_0 - iA_n)\frac{\partial (w_0 - iA_n)}{\partial x_n} \\
        & \quad - (w_0 - iA_n) \sum_\alpha \frac{\partial w_1}{\partial \xi_\alpha}\frac{\partial A_{n}}{\partial x_{\alpha}} + E(w_0 - iA_n)
    \biggr],
\end{align*}
\begin{align*}
    \frac{\partial w_{-1}}{\partial x_\alpha}(A,q)
    & \sim \frac{1}{2}w_1^{-1}
    \biggl[
        -2(w_0 - iA_n) \frac{\partial (w_0 - iA_n)}{\partial x_\alpha} + i \sum_\beta
        \bigg(
            \frac{\partial w_1}{\partial \xi_\beta} \frac{\partial^2 (w_0 - iA_n)}{\partial x_{\alpha}\partial x_{\beta}} \\
            & \quad + \frac{\partial w_0}{\partial \xi_\beta} \frac{\partial^2 w_1}{\partial x_{\alpha}\partial x_{\beta}}
        \bigg)
        - (w_0 - iA_n) \frac{\partial H}{\partial x_\alpha} - H \frac{\partial (w_0 - iA_n)}{\partial x_\alpha} + \frac{\partial^2 (w_0 - iA_n)}{\partial x_{n}\partial x_{\alpha}} \\
        & \quad - \sum_\beta \frac{\partial w_1}{\partial \xi_\beta} \frac{\partial^2 A_{n}}{\partial x_{\alpha}\partial x_{\beta}} + \frac{\partial E}{\partial x_\alpha}
    \biggr],
\end{align*}
and
\begin{align*}
    \frac{\partial w_{-1}}{\partial x_n}(A,q)
    & \sim -\frac{1}{2} w_1^{-2} \frac{\partial w_1}{\partial x_n}
    \biggl[
        -(w_0 - iA_n)^2 + i \sum_\alpha \frac{\partial w_1}{\partial \xi_\alpha} \frac{\partial (w_0 - iA_n)}{\partial x_\alpha} + E
    \biggr] \\
    & \quad + \frac{1}{2} w_1^{-1}
    \biggl[
        -2(w_0 - iA_n) \frac{\partial (w_0 - iA_n)}{\partial x_n} + i \sum_\alpha
        \biggl(
            \frac{\partial^2 w_1}{\partial \xi_{\alpha}\partial x_{n}} \frac{\partial (w_0 - iA_n)}{\partial x_\alpha} \\
            & \quad + \frac{\partial w_1}{\partial \xi_\alpha} \frac{\partial^2 (w_0 - iA_n)}{\partial x_{n}\partial x_{\alpha}} + \frac{\partial w_0}{\partial \xi_\alpha} \frac{\partial^2 w_1}{\partial x_{n}\partial x_{\alpha}}
        \biggr)
        + \frac{\partial E}{\partial x_n}
    \biggr].
\end{align*}
Finally, we obtain
\begin{align*}
    & s_{-4}(A,q)
    \sim -\frac{1}{2} (2s_{-1}^4 + 2s_{-1}^3w_1^{-1} + s_{-1}^2w_1^{-2}) (w_0 - iA_n)^3 + i(3s_{-1}^4 + 2s_{-1}^3w_1^{-1} + s_{-1}^2w_1^{-2})(w_0 - iA_n) \\
    & \quad \times \sum_\alpha \frac{\partial w_1}{\partial \xi_\alpha} \frac{\partial (w_0 - iA_n)}{\partial x_\alpha} - \frac{1}{4}
    \biggl[
        H(4s_{-1}^3w_1^{-1} + 3s_{-1}^2w_1^{-2}) + s_{-1}^2w_1^{-3} \frac{\partial w_1}{\partial x_n}
    \biggr]
    (w_0 - iA_n)^2 \\
    & \quad + (s_{-1}^3w_1^{-1} + s_{-1}^2w_1^{-2})
    \biggl[
        (w_0 - iA_n) \frac{\partial (w_0 - iA_n)}{\partial x_n} - \frac{i}{2} \sum_\alpha \frac{\partial w_1}{\partial \xi_\alpha} \frac{\partial^2 (w_0 - iA_n)}{\partial x_{n}\partial x_{\alpha}}
    \biggr] \\
    & \quad + \frac{1}{4} (2s_{-1}^3w_1^{-1} + s_{-1}^2w_1^{-2})
    \biggl[
        2E(w_0 - iA_n) + i(w_0 - iA_n) \sum_\alpha \frac{\partial w_1}{\partial \xi_\alpha} \frac{\partial (H + 2iA_n)}{\partial x_{\alpha}} \\
        & \quad + i \sum_{\alpha,\beta} \frac{\partial w_1}{\partial \xi_\alpha} \frac{\partial w_1}{\partial \xi_\beta} \frac{\partial^2 A_{n}}{\partial x_{\alpha}\partial x_{\beta}}
    \biggr]
    + \frac{i}{4}
    \biggl[
        2H(s_{-1}^3w_1^{-1}  + s_{-1}^2w_1^{-2}) + s_{-1}^2w_1^{-3} \frac{\partial w_1}{\partial x_n}
    \biggr] 
    \sum_\alpha \frac{\partial w_1}{\partial \xi_\alpha} \frac{\partial (w_0 - iA_n)}{\partial x_{\alpha}} \\
    & \quad + \frac{E}{4} \Bigl(Hs_{-1}^2w_1^{-2} + s_{-1}^2w_1^{-3} \frac{\partial w_1}{\partial x_n}\Bigr) + \frac{1}{2} s_{-1}^2w_1^{-1}
    \biggl(
        \sum_\alpha \frac{\partial w_0}{\partial \xi_\alpha} \frac{\partial A_{n}}{\partial x_{\alpha}} - \frac{i}{2} \sum_{\alpha,\beta} \frac{\partial^2 w_1}{\partial \xi_{\alpha}\partial \xi_{\beta}}\frac{\partial^2 A_{n}}{\partial x_{\alpha}\partial x_{\beta}}
    \biggr) \\
    & \quad - \frac{1}{4} s_{-1}^2w_1^{-2}
    \biggl[
        \frac{\partial E}{\partial x_n} + i \sum_\alpha
        \biggl(
            \frac{\partial^2 w_1}{\partial \xi_{\alpha}\partial x_{n}} \frac{\partial (w_0 - iA_n)}{\partial x_{\alpha}} + \frac{\partial w_0}{\partial \xi_{\alpha}} \frac{\partial^2 w_1}{\partial x_{n}\partial x_{\alpha}}
        \biggr)
    \biggr] \\
    & \quad - \frac{i}{2} (2s_{-1}^3 + s_{-1}^2w_1^{-1}) \sum_\alpha \frac{\partial w_0}{\partial \xi_\alpha} \frac{\partial (w_0 - iA_n)}{\partial x_{\alpha}} \\
    & := \sum_{j=1}^{10} s_{-4}^{(j)}.
\end{align*}

\addvspace{2mm}

Step 5. In what follows, we will compute the integrals of $s_{-4}^{(j)}\ (1 \leqslant j\leqslant 10)$ one by one. To begin with, we apply \eqref{3.13} to the first term $s_{-4}^{(1)}$. Thus,
\begin{equation*}
    (w_0 - iA_n)^3(A,q) \sim -\frac{3}{2} H w_1^{-2} \sum_\alpha A_\alpha^2\xi_{\alpha}^2 + \frac{3}{2} w_1^{-4} \sum_{\alpha,\beta} A_\alpha^2 \kappa_\beta \xi_{\alpha}^2 \xi_{\beta}^2,
\end{equation*}
which implies
\begin{align*}
    \frac{i}{(2 \pi)^{n}} \int_{\mathbb{R}^{n-1}}\int_{\mathcal{C}} s_{-4}^{(1)} e^{-\tau} \,d\tau \,d\xi^{\prime} 
    = \frac{n(n-2)\Gamma(n-3)\operatorname{vol}(\mathbb{S}^{n-2})}{4(2\pi)^{n-1}(n^2-1)} \sum_\alpha A_\alpha^2(nH - 2\kappa_\alpha).
\end{align*}
Then, we consider the term $(w_0 - iA_n) \sum_\alpha \frac{\partial w_1}{\partial \xi_\alpha} \frac{\partial (w_0 - iA_n)}{\partial x_\alpha}$ in $s_{-4}^{(2)}$. Since
\begin{align*}
    \biggl(
        (w_0 - iA_n) \sum_\alpha \frac{\partial w_1}{\partial \xi_\alpha} \frac{\partial (w_0 - iA_n)}{\partial x_\alpha}
    \biggr)(A,q)
    & \sim \frac{1}{2} w_1^{-4} \sum_{\alpha,\beta} \frac{\partial (A_\alpha \kappa_\beta)}{\partial x_{\alpha}} \xi_{\alpha}^2\xi_{\beta}^2 - \frac{1}{2} w_1^{-2} \sum_\alpha \frac{\partial (A_\alpha H)}{\partial x_{\alpha}} \xi_{\alpha}^2,
\end{align*}
we obtain
\begin{align*}
    \frac{i}{(2 \pi)^{n}} \int_{\mathbb{R}^{n-1}}\int_{\mathcal{C}} s_{-4}^{(2)} e^{-\tau} \,d\tau \,d\xi^{\prime} 
    = \frac{i(n-2)\Gamma(n-3)\operatorname{vol}(\mathbb{S}^{n-2})}{4(2\pi)^{n-1}(n+1)} \sum_\alpha \frac{\partial [A_\alpha (2\kappa_\alpha - nH)]}{\partial x_{\alpha}}.
\end{align*}
It is easy to calculate that
\begin{align*}
    \frac{\partial w_1}{\partial x_n} = w_1^{-1} \sum_\alpha \kappa_\alpha \xi_\alpha^2,
\end{align*}
we then have
\begin{align*}
    & \frac{i}{(2 \pi)^{n}} \int_{\mathbb{R}^{n-1}}\int_{\mathcal{C}} s_{-4}^{(3)} e^{-\tau} d\tau \,d\xi^{\prime} 
    = -\frac{\Gamma(n-3)\operatorname{vol}(\mathbb{S}^{n-2})}{4(2\pi)^{n-1}(n^2-1)} \sum_\alpha A_\alpha^2 \big[2\kappa_\alpha + (2n^2 - n - 2)H\big].
\end{align*}
Considering the terms $(w_0 - iA_n) \frac{\partial (w_0 - iA_n)}{\partial x_n}$ and $\sum_\alpha \frac{\partial w_1}{\partial \xi_\alpha} \frac{\partial^2 (w_0 - iA_n)}{\partial x_{n}\partial x_{\alpha}}$ in $s_{-4}^{(4)}$, we find that
\begin{align*}
    \biggl((w_0 - iA_n) \frac{\partial (w_0 - iA_n)}{\partial x_n}\biggr)(A,q)
    & \sim \frac{1}{2} w_1^{-2} \sum_\alpha
    \Bigl(
        iA_\alpha \frac{\partial (H - 2\kappa_\alpha)}{\partial x_{\alpha}} + \frac{\partial A_\alpha^2}{\partial x_{n}}
    \Bigr)
    \xi_{\alpha}^2 \\
    & \quad + \frac{1}{2} w_1^{-4} \sum_{\alpha,\beta}
    \Bigl(
        iA_\alpha \frac{\partial \kappa_\beta}{\partial x_{\alpha}} - 2 A_\alpha^2\kappa_\beta
    \Bigr)
    \xi_{\alpha}^2 \xi_{\beta}^2,
\end{align*}
and
\begin{align*}
    \biggl(
        \sum_\alpha \frac{\partial w_1}{\partial \xi_\alpha} \frac{\partial^2 (w_0 - iA_n)}{\partial x_{n}\partial x_{\alpha}}
    \biggr)(A,q)
    & \sim -w_1^{-4} \sum_{\alpha,\beta} \frac{\partial (A_\alpha\kappa_\beta)}{\partial x_{\alpha}} \xi_{\alpha}^2 \xi_{\beta}^2 + w_1^{-2} \sum_\alpha \frac{\partial^2 A_\alpha}{\partial x_{n}\partial x_{\alpha}} \xi_{\alpha}^2.
\end{align*}
It follows from this and \eqref{3.28.1} that
\begin{align*}
    \frac{i}{(2 \pi)^{n}} \int_{\mathbb{R}^{n-1}}\int_{\mathcal{C}} s_{-4}^{(4)} e^{-\tau} \,d\tau \,d\xi^{\prime}
    & = \frac{\Gamma(n-3)\operatorname{vol}(\mathbb{S}^{n-2})}{4(2\pi)^{n-1}(n+1)} \sum_\alpha
    \biggl[
        iA_\alpha \frac{\partial }{\partial x_{\alpha}} \big((n+3)H - 2(n-1)\kappa_\alpha\big) \\
        & \quad - (H + 2\kappa_\alpha)
        \Bigl( 2A_\alpha^2 - i \frac{\partial A_\alpha}{\partial x_{\alpha}} \Bigr)
        + (n + 1) \frac{\partial }{\partial x_{n}}
        \Bigl( A_\alpha^2 - i \frac{\partial A_\alpha}{\partial x_{\alpha}} \Bigr)
    \biggr].
\end{align*}
Next, for the fifth term $s_{-4}^{(5)}$, we see that
\begin{align*}
    \big[E(w_0 - iA_n)\big](A,q) \sim \frac{E}{2}
    \biggl( w_1^{-2} \sum_\alpha \kappa_\alpha \xi_\alpha^2 - H \biggr),
\end{align*}
\begin{align*}
    \biggl(
        (w_0 - iA_n) \sum_\alpha \frac{\partial w_1}{\partial \xi_\alpha} \frac{\partial (H + 2iA_n)}{\partial x_{\alpha}}
    \biggr)(A,q)
    \sim w_1^{-2} \sum_\alpha A_\alpha \frac{\partial (H + 2iA_n)}{\partial x_{\alpha}} \xi_\alpha^2,
\end{align*}
and
\begin{align*}
    \biggl(
        \sum_{\alpha,\beta} \frac{\partial w_1}{\partial \xi_\alpha} \frac{\partial w_1}{\partial \xi_\beta} \frac{\partial^2 A_{n}}{\partial x_{\alpha}\partial x_{\beta}}
    \biggr)(A,q)
    \sim w_1^{-2} \sum_\alpha \frac{\partial^2 A_{n}}{\partial x_{\alpha}^2} \xi_\alpha^2.
\end{align*}
As a result, we get
\begin{align*}
    & \frac{i}{(2 \pi)^{n}} \int_{\mathbb{R}^{n-1}}\int_{\mathcal{C}} s_{-4}^{(5)} e^{-\tau} \,d\tau \,d\xi^{\prime} \\
    & \quad = -\frac{(n-2)\Gamma(n-3)\operatorname{vol}(\mathbb{S}^{n-2})}{4(2\pi)^{n-1}(n-1)}
    \biggl[
        (n-2)HE - i \sum_\alpha
        \biggl(
            A_\alpha \frac{\partial (H + 2iA_n)}{\partial x_{\alpha}} + \frac{\partial^2 A_{n}}{\partial x_{\alpha}^2}
        \biggr)
    \biggr].
\end{align*}
Noting that
\begin{align*}
    \biggl(
        \sum_\alpha \frac{\partial w_1}{\partial \xi_\alpha} \frac{\partial (w_0 - iA_n)}{\partial x_{\alpha}}
    \biggr)(A,q)
    \sim w_1^{-2} \sum_\alpha \frac{\partial A_{\alpha}}{\partial x_{\alpha}} \xi_\alpha^2,
\end{align*}
we obtain
\begin{align*}
    \frac{i}{(2 \pi)^{n}} \int_{\mathbb{R}^{n-1}}\int_{\mathcal{C}} s_{-4}^{(6)} e^{-\tau} \,d\tau \,d\xi^{\prime} 
    = \frac{i\Gamma(n-3)\operatorname{vol}(\mathbb{S}^{n-2})}{4(2\pi)^{n-1}(n^2-1)} \sum_\alpha \frac{\partial A_{\alpha}}{\partial x_{\alpha}} (n^2H + 2 \kappa_\alpha).
\end{align*}
For the seventh term $s_{-4}^{(7)}$, a simple computation shows that 
\begin{align*}
    \frac{i}{(2 \pi)^{n}} \int_{\mathbb{R}^{n-1}}\int_{\mathcal{C}} s_{-4}^{(7)} e^{-\tau} \,d\tau \,d\xi^{\prime} 
    = \frac{n\Gamma(n-3)\operatorname{vol}(\mathbb{S}^{n-2})}{4(2\pi)^{n-1}(n-1)} HE.
\end{align*}
We consider the terms $\sum_\alpha \frac{\partial w_0}{\partial \xi_\alpha} \frac{\partial A_{n}}{\partial x_{\alpha}}$ and $\sum_{\alpha,\beta} \frac{\partial^2 w_1}{\partial \xi_{\alpha}\partial \xi_{\beta}}\frac{\partial^2 A_{n}}{\partial x_{\alpha}\partial x_{\beta}}$ in $s_{-4}^{(8)}$,
\begin{align*}
    \biggl(
        \sum_\alpha \frac{\partial w_0}{\partial \xi_\alpha} \frac{\partial A_{n}}{\partial x_{\alpha}}
    \biggr)(A,q)
    \sim -w_1^{-3} \sum_\alpha A_\alpha \frac{\partial A_{n}}{\partial x_{\alpha}} \xi_\alpha^2 + w_1^{-1} \sum_\alpha A_\alpha \frac{\partial A_{n}}{\partial x_{\alpha}},
\end{align*}
and
\begin{align*}
    \biggl(
        \sum_{\alpha,\beta} \frac{\partial^2 w_1}{\partial \xi_{\alpha}\partial \xi_{\beta}}\frac{\partial^2 A_{n}}{\partial x_{\alpha}\partial x_{\beta}}
    \biggr)(A,q)
    \sim -w_1^{-3} \sum_\alpha \frac{\partial^2 A_{n}}{\partial x_{\alpha}^2} \xi_\alpha^2 + w_1^{-1} \sum_\alpha \frac{\partial^2 A_{n}}{\partial x_{\alpha}^2}.
\end{align*}
Accordingly,
\begin{align*}
    \frac{i}{(2 \pi)^{n}} \int_{\mathbb{R}^{n-1}}\int_{\mathcal{C}} s_{-4}^{(8)} e^{-\tau} \,d\tau \,d\xi^{\prime} 
    = \frac{(n-2)\Gamma(n-3)\operatorname{vol}(\mathbb{S}^{n-2})}{4(2\pi)^{n-1}(n-1)} \sum_\alpha
    \Bigl(
        2A_\alpha \frac{\partial A_{n}}{\partial x_{\alpha}} - i \frac{\partial^2 A_{n}}{\partial x_{\alpha}^2}
    \Bigr).
\end{align*}
For the ninth term $s_{-4}^{(9)}$, we have
\begin{align*}
    \biggl(
        \sum_\alpha \frac{\partial^2 w_1}{\partial \xi_{\alpha}\partial x_{n}} \frac{\partial (w_0 - iA_n)}{\partial x_{\alpha}}
    \biggr)(A,q)
    \sim -w_1^{-4} \sum_{\alpha,\beta} \frac{\partial A_{\alpha}}{\partial x_{\alpha}} \kappa_\beta \xi_\alpha^2 \xi_\beta^2 + 2w_1^{-2} \sum_\alpha \frac{\partial A_{\alpha}}{\partial x_{\alpha}} \kappa_\alpha \xi_\alpha^2,
\end{align*}
and
\begin{align*}
    \biggl(
        \sum_\alpha \frac{\partial w_0}{\partial \xi_{\alpha}} \frac{\partial^2 w_1}{\partial x_{n}\partial x_{\alpha}}
    \biggr)(A,q)
    & \sim -w_1^{-4} \sum_{\alpha,\beta} A_{\alpha} \frac{\partial \kappa_\beta}{\partial x_{\alpha}} \xi_\alpha^2 \xi_\beta^2 + w_1^{-2} \sum_\alpha A_{\alpha} \frac{\partial \kappa_\beta}{\partial x_{\alpha}} \xi_\beta^2.
\end{align*}
Consequently,
\begin{align*}
    & \frac{i}{(2 \pi)^{n}} \int_{\mathbb{R}^{n-1}}\int_{\mathcal{C}} s_{-4}^{(9)} e^{-\tau} \,d\tau \,d\xi^{\prime} \\
    & \quad = \frac{\Gamma(n-3)\operatorname{vol}(\mathbb{S}^{n-2})}{4(2\pi)^{n-1}(n^2-1)}
    \biggl[
        i \sum_\alpha
        \biggl(
            \frac{\partial A_{\alpha}}{\partial x_{\alpha}} (H - 2n\kappa_\alpha) - A_{\alpha} \frac{\partial (nH - 2\kappa_\alpha)}{\partial x_{\alpha}}
        \biggr)
        - (n^2-1) \frac{\partial E}{\partial x_{n}}
    \biggr].
\end{align*}
Considering $\sum_\alpha \frac{\partial w_0}{\partial \xi_\alpha} \frac{\partial (w_0 - iA_n)}{\partial x_{\alpha}}$ in the last term $s_{-4}^{(10)}$, we obtain
\begin{align*}
    & \biggl(
        \sum_\alpha \frac{\partial w_0}{\partial \xi_\alpha} \frac{\partial (w_0 - iA_n)}{\partial x_{\alpha}}
    \biggr)(A,q) \sim -\frac{1}{2} w_1^{-5} \sum_{\alpha,\beta}
    \Bigl(
        2 \frac{\partial A_{\alpha}}{\partial x_{\alpha}} \kappa_\beta + A_{\alpha} \frac{\partial \kappa_\beta}{\partial x_{\alpha}}
    \Bigr)
    \xi_\alpha^2 \xi_\beta^2 \\
    & \quad + \frac{1}{2} w_1^{-3} \sum_{\alpha,\beta} A_{\alpha} \frac{\partial \kappa_\beta}{\partial x_{\alpha}} (\xi_\alpha^2 + \xi_\beta^2) + w_1^{-3} \sum_{\alpha} \frac{\partial A_{\alpha}}{\partial x_{\alpha}} \kappa_\alpha \xi_\alpha^2 - \frac{1}{2} w_1^{-1} \sum_{\alpha,\beta} A_{\alpha} \frac{\partial \kappa_\beta}{\partial x_{\alpha}}.
\end{align*}
Thus, we get
\begin{align*}
    & \frac{i}{(2 \pi)^{n}} \int_{\mathbb{R}^{n-1}}\int_{\mathcal{C}} s_{-4}^{(10)} e^{-\tau} \,d\tau \,d\xi^{\prime} \\
    & \quad = -\frac{i(n-2)\Gamma(n-3)\operatorname{vol}(\mathbb{S}^{n-2})}{4(2\pi)^{n-1}(n^2-1)} \sum_\alpha
    \biggl[
        2 \frac{\partial A_{\alpha}}{\partial x_{\alpha}} \bigl((n-1)\kappa_\alpha - H\bigr) - A_{\alpha} \frac{\partial }{\partial x_{\alpha}} \bigl((n^2-2n-2)H + 2\kappa_\alpha\bigr)
    \biggr].
\end{align*}
Combining these ten integrals above, we conclude that
\begin{align}\label{3.38.1}
    a_3(A,q)
    & = \frac{\Gamma(n-3)\operatorname{vol}(\mathbb{S}^{n-2})}{4(2\pi)^{n-1}}
    \biggl[
        (n-4)H \sum_\alpha
        \Bigl( A_\alpha^2 - i \frac{\partial A_{\alpha}}{\partial x_{\alpha}} \Bigr) 
        + \sum_\alpha \frac{\partial }{\partial x_{n}}
        \Bigl( A_\alpha^2 - i \frac{\partial A_{\alpha}}{\partial x_{\alpha}} \Bigr) \\
        & \quad + i \sum_\alpha A_{\alpha} \frac{\partial H}{\partial x_{\alpha}} - 2 \sum_\alpha A_\alpha^2 \kappa_\alpha - (n-4)HE - \frac{\partial E}{\partial x_{n}}
    \biggr]. \notag
\end{align}

To complete the proof, we need to simplify the expression above. Recall that
\begin{equation*}
    b = \frac{1}{2}\sum_{\alpha,\beta} g^{\alpha\beta} \frac{\partial g_{\alpha\beta}}{\partial x_n} + 2iA_n.
\end{equation*}
In the boundary coordinates, at the point $x_{0}$, we find that
\begin{align*}
    \frac{\partial b}{\partial x_{n}}(x_{0}) 
    = \frac{1}{2} \sum_\alpha \frac{\partial^2 g_{\alpha\alpha}}{\partial x_n^2} - 2 \sum_\alpha \kappa_\alpha^2 + 2i \frac{\partial A_{n}}{\partial x_{n}}.
\end{align*}
It follows from \eqref{3.26} that
\begin{align*}
    \frac{\partial V}{\partial x_{n}}(x_{0})
    & = \frac{\partial }{\partial x_{n}}
    \biggl[
        \sum_{j}
        \Bigl(A_j^2 - i \frac{\partial A_j}{\partial x_j}\Bigr)
        + q
    \biggr]
    + 2iA_n \sum_\alpha \kappa_\alpha^2 + i \sum_\alpha A_\alpha \frac{\partial H}{\partial x_\alpha} + i \frac{\partial A_{n}}{\partial x_n} \sum_\alpha \kappa_\alpha \\
    & \quad - 2 \sum_\alpha A_\alpha^2 \kappa_\alpha - \frac{i}{2} A_{n} \sum_\alpha \frac{\partial^2 g_{\alpha\alpha}}{\partial x_n^2}.
\end{align*}
Combining this and \eqref{3.25}, we obtain
\begin{align*}
    \frac{\partial E}{\partial x_{n}}(x_{0})
    = \frac{\partial }{\partial x_{n}}
    \biggl[
        \sum_\alpha
        \Bigl( A_\alpha^2 - i \frac{\partial A_{\alpha}}{\partial x_{\alpha}} \Bigr)
        + q
    \biggr]
    + i \sum_\alpha A_{\alpha} \frac{\partial H}{\partial x_{\alpha}} - 2 \sum_\alpha A_{\alpha}^2 \kappa_\alpha.
\end{align*}
Substituting this and \eqref{3.28} into \eqref{3.38.1}, we then get
\begin{align}\label{3.39}
    a_3(A,q) 
    = -\frac{\Gamma(n-3)\operatorname{vol}(\mathbb{S}^{n-2})}{4(2\pi)^{n-1}}
    \biggl[\frac{\partial q}{\partial x_{n}} + (n-4)Hq - (n-4)k^2H\biggr].
\end{align}
Finally, we obtain $a_3(x)$ in Theorem \ref{thm1.1} by applying \eqref{3.10}, \eqref{3.31} and \eqref{3.39}.
\end{proof}

\addvspace{5mm}

\begin{remark}
    Using formulas \eqref{2.9.1} and \eqref{2.14.1}, one may further calculate the lower order symbols $w_{-1-m}\ (m \geqslant 2)$ and $s_{-1-m}\ (m \geqslant 4)$. Therefore, we can get $a_k(x)$ for $k \geqslant 4$ by formula \eqref{3.1}.
\end{remark}

\addvspace{5mm}

\begin{proof}[Proof of Corollary \ref{cor1.3}]
    Since the sectional curvature of $\Omega$ is a constant $K$, the Riemann curvature tensor has the form (see p.~183 of \cite{ChowKnopf04})
    \begin{align*}
        \tilde{R}_{ijkl} = K(g_{il}g_{jk} - g_{ik}g_{jl}).
    \end{align*}
    In this case, it follows from \eqref{ricci} and \eqref{scalar} that
    \begin{align*}
        \tilde{R}_{ij} = (n-1)K g_{ij},
    \end{align*}
    and
    \begin{align}\label{3.19.1}
        \tilde{R} = n(n-1)K.
    \end{align}
    In the boundary normal coordinates, at the point $x_{0}$, we have
    \begin{align}\label{3.22.3}
        \tilde{R}_{ij} = (n-1)K \delta_{ij}.
    \end{align}
    According to the Gauss equation (see p.~39--40 of \cite{ChowLL06})
    \begin{align*}
        R_{\alpha\beta\gamma\rho} 
        = \tilde{R}_{\alpha\beta\gamma\rho} + h_{\alpha\rho}h_{\beta\gamma} - h_{\alpha\gamma}h_{\beta\rho},
    \end{align*}
    one has
    \begin{align*}
        R_{\alpha\beta} 
        = \tilde{R}_{\alpha\beta} - \tilde{R}_{n \alpha\beta n} + Hh_{\alpha\beta} - \sum_{\gamma} h_{\alpha\gamma}h_{\gamma\beta},
    \end{align*}
    and
    \begin{align*}
        R = \tilde{R} - 2\tilde{R}_{nn} + H^2 - |h|^2.
    \end{align*}
    We recall that $h_{\alpha\beta}(x_{0}) = \kappa_{\alpha}\delta_{\alpha\beta}$, thus
    \begin{align}\label{3.22.1}
        R_{\alpha\alpha} = (n-2)K + H\kappa_{\alpha} - \kappa_{\alpha}^2,
    \end{align}
    and
    \begin{align}\label{3.22.2}
        \sum_{\alpha} \kappa_{\alpha}^2 = (n-1)(n-2)K + H^2 - R.
    \end{align}
    Substituting \eqref{3.19.1} and \eqref{3.22.2} into the expression for $a_2(x)$ in Theorem \ref{thm1.1}, we immediately get $a_2(x)$ in Corollary \ref{cor1.3}.

    Next, multiplying by $\kappa_{\alpha}$ on both sides of \eqref{3.22.1}, and using \eqref{3.22.2}, we obtain
    \begin{align}\label{3.24}
        \sum_{\alpha} \kappa_{\alpha} R_{\alpha\alpha} 
        = H^3 + n(n-2)HK - HR - \sum_{\alpha} \kappa_{\alpha}^3.
    \end{align}
    It follows from formula \eqref{covariant} that
    \begin{align*}
        \nabla_{\frac{\partial }{\partial x_n}}\tilde{R}_{nn} 
        = \frac{\partial \tilde{R}_{nn}}{\partial x_n} - 2\sum_i \Gamma^{i}_{nn} \tilde{R}_{in}.
    \end{align*}
    In boundary normal coordinates, from \eqref{chris} we have
    \begin{align*}
        \Gamma^{i}_{nn} = \frac{1}{2} \sum_j g^{ij}
        \Bigl(
            2\frac{\partial g_{nj}}{\partial x_n} - \frac{\partial g_{nn}}{\partial x_j}
        \Bigr)
        = 0.
    \end{align*}
    Combining this and \eqref{3.22.3}, we obtain $\nabla_{\frac{\partial }{\partial x_n}}\tilde{R}_{nn} = 0$. Substitute \eqref{3.19.1}, \eqref{3.22.2}, \eqref{3.22.3} and \eqref{3.24} into the expression for $a_3(x)$ in Theorem \ref{thm1.1}. Finally, we get $a_3(x)$ in Corollary \ref{cor1.3}.
\end{proof}

\addvspace{10mm}

\section*{Acknowledgements}
This research was supported by NNSF of China (11671033/A010802) and NNSF of China \\
(11171023/A010801).

\addvspace{10mm}


\begin{thebibliography}{99}
    \bibitem{Agra87}
    M. Agranovich, 
    Some Asymptotic Formulas for Elliptic Pseudodifferential Operators, 
    Funk. Anal. Prilozh.,
    21 (1987), 53--56.

    \bibitem{ANPS09}
    W. Arendt, R. Nittka, W. Peter, F. Steiner, 
    Weyl’s Law: Spectral Properties of the Laplacian in Mathematics and Physics, 
    in: W. Arendt, W. Schleich (Eds.), Mathematical Analysis of Evolution, Information, and Complexity, Wiley-VCH Verlag GmbH $\&$ Co. KGaA, Weinheim, 2009.
    
    \bibitem{AHB78.1}
    J. Avron, I. Herbst, B. Simon,
    Schrödinger operators with magnetic fields. I. general interactions,
    Duke Math. J., 
    45(4) (1978), 847–883.

    \bibitem{BellChou10}
    M. Bellassoued, M. Choulli,
    Stability estimate for an inverse problem for the magnetic Schrödinger equation from the Dirichlet-to-Neumann map,
    J. Func. Anal., 
    258 (2010), 161–195.

    \bibitem{BCP12}
    G. Berdiyorov, X. Chao, F. Peeters, et al.,
    Magnetoresistance oscillations in superconducting strips: A Ginzburg-Landau study,
    Physical Review B,
    86(22) (2012), 233--243.

    \bibitem{BranGilk90}
    T. Branson, P. Gilkey,
    The asymptotics of the Laplacian on a manifold with boundary,
    Commun. Part. Differ. Eq.,
    15 (1990), 245–272.

    \bibitem{Cald80}
    A. Calder\'{o}n, 
    On an inverse boundary value problem. 
    In: Seminar in Numerical Analysis and its Applications to Continuum Physics,
    Soc. Brasileira de Matemática, Rio de Janeiro, 
    (1980), 65–73.

    \bibitem{ChowKnopf04}
    B. Chow, D. Knopf,
    The Ricci flow: An Introduction,
    American Mathematical Society, USA, 2004.

    \bibitem{ChowLL06}
    B. Chow, P. Lu, L. Ni,
    Hamilton’s Ricci Flow, 
    Science Press/American Mathematical Society, Beijing/Providence, RI, 2006.

    \bibitem{DKSU09}
    D. Dos Santos Ferreira, C. Kenig, M. Salo, G. Uhlmann,
    Limiting Carleman weights and anisotropic inverse problems,
    Invent. Math.,
    178 (2009), 119–171.

    \bibitem{DKSU07}
    D. Dos Santos Ferreira, C. Kenig, J. Sjöstrand, G. Uhlmann, 
    Determining a Magnetic Schrödinger Operator from Partial Cauchy Data, 
    Commun. Math. Phys.,
    271 (2007), 467–488.

    \bibitem{DuisGuill75}
    H. Duistermaat, V. Guillemin, 
    Spectrum of positive elliptic operators and periodic bicharacteristics, 
    Invent. Math.,
    29(1) (1975), 39–79.

    \bibitem{Edward91}
    J. Edward, S. Wu, 
    Determinant of the Neumann operator on smooth Jordan curves, 
    Proc. Amer. Math. Soc.,
    111(2) (1991), 357–363.

    \bibitem{EPV08}
    A. Elmurodov, F. Peeters, D. Vodolazov, et al.,
    Phase-slip phenomena in NbN superconducting nanowires with leads,
    Physical Review B,
    78(21) (2008), 214519.

    \bibitem{Erdos07}
    L. Erdös,
    Recent developments in quantum mechanics with magnetic fields, 
    in Spectral theory and mathematical physics: a Festschrift in honor of Barry Simon’s 60th birthday.,
    Proc. Sympos. Pure Math., Amer. Math. Soc., Providence, RI,
    76 (2007), Part 1, 401–428.

    \bibitem{Eskin01}
    G. Eskin,
    Global Uniqueness in the Inverse Scattering Problem for the Schrödinger Operator with External Yang–Mills Potentials,
    Commun. Math. Phys.,
    222 (2001), 503–531.

    \bibitem{FourHelf10}
    S. Fournais, B. Helffer, 
    Spectral methods in surface superconductivity,
    Springer Science $\&$ Business Media, New York, USA, Vol. 77, 2010.

    \bibitem{FoxKuttler83}
    D. Fox, J. Kuttler,
    Sloshing frequencies, 
    Z. Angew. Math. Phys., 
    34 (1983), 668–696.

    \bibitem{Frank07}
    R. Frank,
    On the asymptotic number of edge states for magnetic Schrödinger operators,
    Proc. London Math. Soc., 95(1) (2007), 1--19.

    \bibitem{Full94}
    S. Fulling (Ed.), 
    Heat Kernel Techniques and Quantum Gravity, 
    Discourses Math. Appl. 4, Texas A$\&$M Univ., College Station, 1995.

    \bibitem{Gilkey04}
    P. Gilkey,
    Asymptotic Formulae in Spectral Geometry,
    Boca Raton, 2004.

    \bibitem{Gilkey95}
    P. Gilkey,
    Invariance Theory, the Heat Equation and the Atiyah-Singer Index Theorem,
    CRC Press, Boca Raton, 1995.

    \bibitem{Gilkey75} 
    P. Gilkey, 
    The spectral geometry of a Riemannian manifold, 
    J. Differential Geom.,
    10 (1975), 601–618.

    \bibitem{GilkeyGrubb98}
    P. Gilkey, G. Grubb,
    Logarithmic terms in asymptotic expansions of heat operator traces,
    Commun. Part. Differ. Eq.,
    23(5-6) (1998), 777–792.

    \bibitem{GimpGrubb14}
    H. Gimperlein, G. Grubb,
    Heat kernel estimates for pseudodifferential operators, fractional Laplacians and Dirichlet-to-Neumann operators,
    J. Evol. Equ.,
    14 (2014), 49–83.

    \bibitem{Grubb86}
    G. Grubb,
    Functional Calculus of Pseudo-Differential Boundary Problems,
    Birkh\"{a}user, Boston, 1986.

    \bibitem{GrubbSeeley95}
    G. Grubb, R. Seeley,
    Weakly parametric pseudodifferential operators and Atiyah-Patodi-Singer boundary problems,
    Invent. Math.,
    121 (1995), 481–529.

    \bibitem{Hormander85.3}
    L. H\"{o}rmander, 
    The Analysis of Partial Differential Operators III, 
    Springer-Verlag, Berlin Heidelberg New York, 1985.

    \bibitem{IkebeKato62}
    T. Ikebe, T. Kato,
    Uniqueness of the self-adjoint extension of singular elliptic differential operators,
    Arch. Rational Mech. Anal.,
    9 (1962), 77–92.

    \bibitem{Kac66}
    M. Kac,
    Can one hear the shape of a drum?
    Amer. Math. Monthly,
    73(4) (1966), 1–23.

    \bibitem{KapKev05}
    T. Kapitula, P. Kevrekidis,
    Bose-Einstein condensates in the presence of a magnetic trap and optical lattice. Chaos: An Interdisciplinary Journal of Nonlinear Science, 15(3) (2005), 037114.

    \bibitem{Kato72}
    T. Kato,
    Schrödinger operators with singular potentials,
    Israel J. Math.,
    13 (1972), 135–148.

    \bibitem{Knowles77}
    I. Knowles,
    On essential self-adjointness for singular elliptic differential operators,
    Math. Ann.,
    227 (1977), 155–172.

    \bibitem{KopaKrein01}
    N. Kopachevsky, S. Krein, 
    Operator Approach to Linear Problems of Hydrodynamics, 
    vol. 1, Oper. Theory Adv. Appl., vol. 128, Birkhäuser-Verlag, 
    Basel, 2001.

    \bibitem{Kore04}
    J. Korevaa, 
    Tauberian Theory: A Century of Developments, 
    Springer-Verlag, Berlin, Heidelberg, 2004.

    \bibitem{KrupUhlm18}
    K. Krupchyk, G. Uhlmann,
    Inverse Problems for Magnetic Schrödinger Operators in Transversally Anisotropic Geometries,
    Commun. Math. Phys.,
    361 (2018), 525–582.
    
    \bibitem{KrupUhlm14}
    K. Krupchyk, G. Uhlmann,
    Uniqueness in an Inverse Boundary Problem for a Magnetic Schrödinger Operator with a Bounded Magnetic Potential,
    Commun. Math. Phys.,
    327 (2014), 993–1009.

    \bibitem{Kuch01}
    P. Kuchment, 
    The mathematics of photonic crystals. In G. Bao, L. Cowsar and W. Masters (eds.), Mathematical modeling in optical science. Frontiers in Applied Mathematics, 22. Society for Industrial and Applied Mathematics (SIAM), Philadelphia, PA, 2001, 207–272.

    \bibitem{LeeUhlm89}
    M. Lee, G. Uhlmann,
    Determining anisotropic real-analytic conductivities by boundary measurements,
    Commun. Pure Appl. Math.,
    42(8) (1989), 1097–1112.

    \bibitem{Liu15}
    G.Q. Liu,
    Asymptotic expansion of the trace of the heat kernel associated to the Dirichlet-to-Neumann operator,
    J. Differ. Equations,
    259(7) (2015), 2499–2545.

    \bibitem{Liu19}
    G.Q. Liu,
    Determination of isometric real-analytic metric and spectral invariants for elastic Dirichlet-to-Neumann map on Riemannian manifolds,
    arXiv:1908.05096, 2019.

    \bibitem{Liu_NL21}
    G.Q. Liu,
    Geometric Invariants of Spectrum of the Navier–Lamé Operator,
    J. Geom. Anal.,
    (2021), https://doi.org/10.1007/s12220-021-00639-8.

    \bibitem{Liu16}
    G.Q. Liu,
    On asymptotic properties of biharmonic Steklov eigenvalues,
    J. Differ. Equations,
    261(9) (2016), 4729--4757.

    \bibitem{Liu_S21}
    G.Q. Liu,
    The geometric invariants for the spectrum of the Stokes operator,
    Math. Ann.,
    (2021), https://doi.org/10.1007/s00208-021-02167-w.

    \bibitem{Liu11}
    G.Q. Liu,
    The Weyl-type asymptotic formula for biharmonic Steklov eigenvalues on Riemannian manifolds,
    Adv. Math.,
    228(4) (2011), 2162–2217.

    \bibitem{Loren47}
    G. Lorenß,
    Beweis des Gaußschen Integralsatzes,
    Math. Z.,
    51 (1947), 61–81.

    \bibitem{Morrey66}
    C. Morrey,
    Multiple Integrals in the Calculus of Variations,
    Springer-Verlag, New York, Inc., 1966.

    \bibitem{Morrey58}
    C. Morrey,
    On the analyticity of the solutions of analytic non-linear elliptic systems of partial differential equations,
    Amer. J. Math.,
    80 (1958), 198--218.

    \bibitem{NSU95}
    G. Nakamura, Z. Sun, G. Uhlmann,
    Global identifiability for an inverse problem for the Schrödinger equation in a magnetic field,
    Math. Ann.,
    303 (1995), 377–388.

    \bibitem{PSU10}
    L. Päivärinta, M. Salo, G. Uhlmann,
    Inverse scattering for the magnetic Schrödinger operator,
    J. Func. Anal.,
    259(7) (2010), 1771–1798.

    \bibitem{Protter87}
    M. Protter,
    Can One Hear the Shape of a Drum? Revisited,
    SIAM Rev.,
    29(2) (1987), 185–197.

    \bibitem{RagHal08}
    S. Raghu, F. Haldane,
    Analogs of quantum-Hall-effect edge states in photonic crystals,
    Physical Review A, 78(3) (2008), 033834.

    \bibitem{SafarovVass97}
    Yu. Safarov, D. Vassiliev, 
    The Asymptotic Distribution of Eigenvalues of Partial Differential Operators, 
    American Mathematical Society, 1997.

    \bibitem{Sandgren55}
    L. Sandgren,
    A Vibration Problem,
    Meddelanden frÅn Lunds Universitets Matematiska Seminarium,
    13 (1955), 1–83.

    \bibitem{Sche75}
    M. Schechter,
    Essential self-adjointness of the Schrödinger operator with magnetic vector potential,
    J. Func. Anal.,
    20(2) 1975, 93--104,

    \bibitem{Seeley67}
    R. Seeley,
    Complex Powers of an Elliptic Operator,
    Proc. Symp. Pure Math.,
    10 (1967), 288–307.

    \bibitem{Seeley69}
    R. Seeley,
    The resolvent of an elliptic boundary value problem,
    Amer. J. Math., 91 (1969), 889--920.

    \bibitem{Simon00}
    B. Simon,
    Schrödinger operators in the twentieth century,
    J. Math. Phys.,
    41 (2000), 3523–3555.

    \bibitem{Simon73}
    B. Simon,
    Schrödinger operators with singular magnetic vector potentials,
    Math. Z.,
    131 (1973), 361–370.

    \bibitem{Steklov02}
    V. Steklov,
    Sur les problèmes fondamentaux de la physique mathématique, 
    Ann. Sci. Éc. Norm. Supér.,
    19 (1902) 455–490.

    \bibitem{Stewart74}
    H. Stewart,
    Generation of analytic semigroups by strongly elliptic operators,
    Trans. Amer. Math. Soc.,
    199 (1974), 141--161.

    \bibitem{Sun93}
    Z. Sun,
    An inverse boundary value problem for Schrödinger operators with vector potentials,
    Trans. Am. Math. Soc.,
    338(2) (1993), 953–969.

    \bibitem{SylvUhlm87}
    J. Sylvester, G. Uhlmann,
    A global uniqueness theorem for an inverse boundary value problem,
    Ann. Math.,
    125(1) (1987), 153–169.

    \bibitem{Taylor96.2}
    M. Taylor,
    Partial Differential Equations II,
    Spinger-Verlag, New York, 1996.

    \bibitem{Taylor81}
    M. Taylor,
    Pseudodifferential Operators,
    Princeton University Press, Princeton, New Jersey, 1981.

    \bibitem{Treves80}
    F. Treves,
    Introduction to pseudodifferential and Fourier integral operator,
    Plenum Press, New York, 1980.

    \bibitem{Uhlmann14}
    G. Uhlmann, 
    Inverse problems: seeing the unseen. 
    Bull. Math. Sci.,
    4(2) (2014), 209–279.

    \bibitem{WangWang19}
    W. Wang, Z. Wang,
    On the relative heat invariants of the Dirichlet-to-Neumann operators associated with Schrödinger operators,
    J. Pseudo-Differ. Oper. Appl.,
    10 (2019), 805–836.

    \bibitem{Weyl12}
    H. Weyl,
    Über die Abhängigkeit der Eigenschwingungen einer Membran und deren Begrenzung,
    J. Reine Angew. Math.,
    141 (1912), 1–11.
\end{thebibliography}
\end{document}